\def\uno{\mathbb 1}
\def\vol{\text{vol }}
\def\t{\text{\itshape{\textsf{T}}}} 
\def\ep{\varepsilon}
\def\bar{\overline}
\def\fro{\mathrm{F}}
\def\diag{W} % \mathit{\Delta}}
\newtheorem{theorem}{Theorem}[section]
\newtheorem{lemma}[theorem]{Lemma}
\newtheorem{corollary}[theorem]{Corollary}
\newtheorem{definition}[theorem]{Definition}
\newtheorem{remark}[theorem]{Remark}
\def\ps@pprintTitle{%
	\let\@oddhead\@empty
	\let\@evenhead\@empty
	\def\@oddfoot{}%
	\let\@evenfoot\@oddfoot}
\begin{document}

\title{Generalized modularity matrices}

\author[dcfa]{Dario Fasino\fnref{fndf}} %\corref{cor1}}
\ead{dario.fasino@uniud.it}

\address[dcfa]{Department of Chemistry, Physics, and Environment,
University of Udine, Udine, Italy.}

\author[sb]{Francesco Tudisco\fnref{fnft}}
\ead{tudisco@cs.uni-saarland.de}

\address[sb]{Department of Mathematics and Computer Science, Saarland University, Saarbr\"ucken, Germany.}
%\cortext[cor1]{Corresponding author}
\fntext[fndf]{The work of this author has been partially supported by INDAM-GNCS.}
\fntext[fnft]{The work of this author has been partially supported by the ERC Grant NOLEPRO.}
\begin{keyword}
Community detection, modularity matrix, nodal domains.

\MSC 05C50, 15A18, 15B99
\end{keyword}

\begin{abstract}
Various modularity matrices appeared in the recent literature
on network analysis and algebraic graph theory. Their purpose
is to allow writing as quadratic forms certain combinatorial functions appearing in the framework of graph clustering problems. In this paper we put in evidence certain common traits of various 
modularity matrices and shed light on their spectral properties that are at the basis of various theoretical results and practical spectral-type algorithms for community detection.
\end{abstract}

\maketitle

\section{Introduction}

Consider the following problem: 
% % % df: Sono sicuro, ci va la maiuscola. Vedi:
% % % http://grammar.ccc.commnet.edu/grammar/marks/colon.htm
We have a group of individuals, objects, or documents,
bound together by a kind of reciprocal similarity relationship,
and we want to localize a cluster, a tightly knit subset of such group that can be recognized as a ``community'',
in some sense.
In the common terminology of network science, this is an example of a community detection problem \cite{santo-fortunato,Schaeffer2007}.
In fact, community detection problems are among the most 
relevant problems in the analysis of complex networks.

Networks are widely used to model a large variety of real life systems and appear in many fields of scientific interests. Community detection
and graph clustering methods may reveal many significant 
network properties and, as a consequence, are receiving a considerable amount of attention from various research areas, see e.g., 
\cite{Louvain_method, Estrada2011, GA05}. 
One of the most popular method for community detection is that of \textit{modularity}. The idea was proposed by Newman and Girvan in \cite{newman-girvan} and is essentially based on the maximization of a function called indeed modularity. However there is no clear or universally accepted definition of community in a graph; despite of this,  almost any recent definition or  community detection method is based on the maximization of a quadratic quality function related with the original modularity, see for instance, \cite{multires-2,multires-1,Ronhovde_Nussinov}. 

% % % df: 31.01.2015. Rivedo la presentazione del lavoro.
In this paper we basically propose a unified framework for a number of modularity-type matrices and functions borrowed from recent literature on community detection, and we analyse their spectral properties that are 
of possible interest for community detection methods.
In particular, we prove a modularity-oriented version of a well known theorem due to Fiedler \cite[Thm.\ 3.3]{fiedler-vector} that holds for the Laplacian matrix of a graph.
Our theorem holds for any negative semidefinite rank-one perturbation of a  
symmetric matrix $A$ with nonnegative off diagonal entries, and can be used to ensure the connectivity of the modules generated by the best known algorithms for community detection inspired by the renowned spectral partitioning method.

This paper is organized as follows.
After introducing hereafter our main notation, in Section \ref{sec:motivations} we present briefly a number of topics arising in graph clustering literature, which provide several relevant examples where our concept of
generalized modularity matrix comes from. 
In the subsequent Section \ref{sec:main} we prove our main result, which shows that a certain nodal domain of a leading eigenvector of a generalized modularity matrix is connected. 
% We also briefly describe how the result so far obtained can be applied to the problem of communities detection in networks. 
In the successive sections we deepen the study of spectral properties of generalized modularity matrices. In fact, we consider the identifiability of a prescribed cluster
as a nodal domain of the leading eigenvector (Section 4), the increase of the largest eigenvalue due to a newly added edge (Section 5), and the relationship between positive eigenvalues of a modularity matrix and the number of distinct clusters that can be recognized in a given network (Section 6). 
Finally, Section \ref{sec:conclusions} is used to point out some conclusive remarks.

% ----------------------------------------------------
\subsection{Notations and preliminaries}\label{sec:notations}
% ----------------------------------------------------

% % % Fra: bell'idea di usare solo $V$ ed $E$, vedendo $E$ come una funzione su VxV. L'unica pecca è quel simbolo $R_{\geq 0}$ che onestamente proprio non mi piace...a te piace?
% % % df: 31.01.2015. L'idea di definire $G = (V,E)$ l'ho trovata da qualche parte (appena la ritrovo, ti dico dove), mi è piaciuta per la semplicità e l'ho copiata subito. Il simbolo $R_{\geq 0}$ non piace neanche a me, in particolare la grafica del nuovo "R" del pacchetto bbold.  
A {\em symmetric weighted graph} $G$ is a pair $(V,E)$ where $V$ is a finite set of nodes (or vertices), 
and $E : V\times V\mapsto \mathbbm{R}_{\geq 0}$ is a nonnegative weight function defined over edges, that is, node pairs, where $E(i,j) = E(j,i)$. In practice, edges with larger weights represent stronger connections among nodes, so missing edges get weight $0$.
If $E(i,i) > 0$ then we have a {\em loop} on node $i$.
% $E$ called the \textit{edge set},  and $\omega:E\rightarrow \mathbbm R^+$ is a nonnegative weight function defined on $E$, which represent the strength of the relation identified by any edge $e \in E$. 
Any graph considered in the following is assumed symmetric, weighted, and connected.
Since $V$ is finite we freely identify it with $\{1,\dots,n\}$.
% and write $ij$ for an element of $E$.

There exists a natural bijection that associates to any graph $G$ a componentwise nonnegative, irreducible, symmetric matrix $A\equiv(a_{ij})$, called adjacency matrix, defined by 
% $a_{ij}=\omega(ij)$ if $ij\in E$, and $a_{ij}=0$ otherwise. 
$a_{ij} = E(i,j)$.
Further relevant notation is listed below.
\begin{itemize}
%\item If $ij  \in E$ we write $i\sim j$ and say that $i$ and $j$ are adjacent.

\item For any $i\in V$, $d_i$ denotes its degree, 
%$d_i=\sum_{j: ij\in E}\omega(ij)$. 
$d_i=\sum_{j\in V} a_{ij}$. 
The vector of degrees of $G$ is denoted by $d = (d_1,\ldots,d_n)^\t$.

\item For any $S\subseteq V$ we denote by
$\bar S$ the complement $V \setminus S$ and let $\vol S = \sum_{i \in S}d_i$ be the {\em volume} of $S$. 
In particular,     
$\vol G = \sum_{i \in V}d_i$ is the volume of the whole graph.

\item
For any $S\subseteq V$, if $X$ is an $n\times n$ matrix then we denote by $X(S)$ the principal submatrix of $X$ whose indices are in $S$.
Analogously, we denote by $G(S)$ the subgraph of $G$ induced by nodes in $V$, that is the graph whose
adjacency matrix is $A(S)$.

\item Let $\uno$ denote the vector of all ones whose dimension depends on the context. Furthermore, 
for any $S \subseteq \{1,\dots,n\}$ we let $\uno_S$ be its \textit{characteristic vector}, defined as $(\uno_S)_i =1$ if $i \in S$ and $(\uno_S)_i=0$ otherwise.

\item The cardinality of a set $S$ is denoted by $|S|$. 
In particular, $|V|=n$.

\item For a matrix $A$ and a vector $x$, we write $A\geq O$ or $x\geq 0$ (resp. $A>O$ or $x>0$) to denote  componentwise nonnegativity (resp., positivity). 

\item If $X$ is a symmetric matrix then its eigenvalues are
denoted by $\lambda_i(X)$ and are ordered as $\lambda_1(X)\geq \cdots\geq \lambda_n(X)$, unless otherwise specified.
\end{itemize}

We will freely use familiar properties of matrices
such as the variational characterization of eigenvalues of symmetric matrices, Gershgorin's eigenvalue localization theorem,
and fundamental results in
Perron--Frobenius theory, see e.g., 
\cite{positive-book,wilkinson}. 
For completeness, we recall hereafter 
some important facts concerning the symmetric eigenvalue problem:
\begin{itemize}
\item (Cauchy interlacing theorem)
Let $A\in\mathbbm{R}^{n\times n}$ be a symmetric matrix 
and let $Z\in\mathbbm{R}^{n\times (n-k)}$ be a matrix 
with orthonormal columns.
Then, for all $i = 1,\ldots,n-k$,
\begin{equation}   \label{eq:interlacing}
   \lambda_i(A) \geq \lambda_i(Z^\t AZ) \geq \lambda_{i+k}(A) .
\end{equation}
\item 
Let $A\in\mathbbm{R}^{n\times n}$ be a symmetric matrix 
and let $B\in\mathbbm{R}^{(n-k)\times (n-k)}$ 
be a principal submatrix of $A$. Then, for all $i = 1,\ldots,n-k$,
\begin{equation}   \label{eq:interlacing2}
   \lambda_i(A) \geq \lambda_i(B) \geq \lambda_{i+k}(A) .
\end{equation}
\item
(Weyl's inequalities)
Let $A$ be a real symmetric matrix of order $n$
and $v\in\mathbbm{R}^n$. Then, for $i = 1,\ldots,n-1$,
\begin{equation}   \label{eq:Weyl}
   \lambda_i(A) \geq \lambda_{i+1}(A+vv^\t)
   \geq \lambda_{i+1}(A) .
\end{equation}
\end{itemize}

% =========================================================
\section{Motivations and overview}\label{sec:motivations}
% =========================================================

The discover and description of communities in a graph is a central problem in modern graph analysis; 
an elementary overview of graph clustering problems and techniques is the survey \cite{Schaeffer2007}.
 Although intuition suggests that a community (or cluster) in $G$ should be a possibly connected group of nodes 
whose internal connections are stronger than  
those with the rest of the network,
there is no universally accepted definition of community. 
A survey of several proposed definitions of community can be found in \cite{santo-fortunato}.
However,
as the author of that paper therein underlines, the % global 
definition based on the modularity quality function is by far the most popular one. The modularity function was proposed by Newman and Girvan in \cite{newman-girvan} as a possible measure of whether a subgraph % (or a collection of subgraphs) 
of $G$ is a cluster or not. They assert that a subset $S\subseteq V$ is a cluster % of nodes in $G$ 
if the induced subgraph $G(S)$ contains more edges than 
% the number % of expected edges 
those expected if edges were placed at random
preserving node degrees. 
All such subsets are indeed those having positive modularity. Since no information on the connectedness nor the dimension of the clusters is  given by subsets with positive modularity, we shall call such subgraphs not just communities but rather \textit{modules}. Let us formalize such concept. Consider a graph $G$ and the associated adjacency matrix $A$. The graph $G$ may have loops, and edges may be weighted, so that $A$ is a rather arbitrary nonnegative matrix. If $d=A\uno$ is the degree vector and $\vol G = \sum_i d_i$ is the volume of the graph, the 
\textit{Newman--Girvan modularity matrix} of $G$ is 
defined as \cite{newman-eigenvectors,newman-modularity,newman-girvan}
\begin{equation}\label{eq:modularity-matrix}
   M_{\mathrm{NG}} = A - \frac{1}{\vol G}dd^\t
\end{equation}
and the modularity measure of a subset $S\subseteq V$ is 
usually given by the associated quadratic form
$$ %%% \begin{equation}\label{eq:Q(S)}
   Q_{\mathrm{NG}}(S)=\uno_S^\t M_{\mathrm{NG}} \uno_S
$$ %%% \end{equation}
where $\uno_S$ denotes the characteristic vector of the set $S\subseteq V$. Thus modules are subgraphs $G(S)$ such that $Q_{\mathrm{NG}}(S)>0$. A module which is connected and has a considerable size is commonly considered as a good community candidate. Remark the equivalent formulas
$$
   Q_{\mathrm{NG}}(S)=
   \uno_S^\t A \uno_S - \frac{(d^\t\uno_S)^2}{\vol G} = 
   e_{\mathrm{in}}(S) - \frac{(\vol S)^2}{\vol G} ,
$$
where 
\begin{equation}   \label{eq:e_in}
   e_{\mathrm{in}}(S) = \uno_S^\t A \uno_S
   = \sum_{i,j\in S} E(i,j) 
\end{equation}
is the overall strength of internal links.

Besides the Newman--Girvan matrix, several generalized modularity matrices appear in the community detection literature, often in a rather hidden form. 
Indeed, in \cite{FT14} we initially focused our investigations on $M_{\mathrm{NG}}$, but we realized 
afterward that a clear common structure is shared by a number of 
different modularity measures and
matrices appearing in this scientific area.
Thus we propose here a spectral analysis which uncovers common properties shared by all of them.    A generalized modularity matrix is any negative semidefinite rank-one correction of a real symmetric matrix with nonnegative off-diagonal entries. We shall denote any such a matrix with the symbol $M$ and we shall state a  formal definition in the subsequent Section \ref{sec:main}.

In the remaining part of this section we shortly discuss various topics arising in the community detection literature, 
presenting other modularity-type matrices and motivating the introduction of generalized modularity matrices in turn.

% % ------------------------------------
\subsection{Newman's spectral method}\label{sec:SSGB}
% % ------------------------------------
 
A major task in community detection is to look for a module in $G$ having maximal modularity,
briefly called \textit{a leading module} in what follows. 
The probably best known methods for detecting a leading module are based on the idea of spectral techniques, firstly introduced
in graph partitioning problems.

Consider the set $\{0,1\}^n$ of $n$-dimensional vectors whose components are only $0$ or $1$. Clearly $Q_*=\max_{S\subseteq V}Q_{\mathrm{NG}}(S) = \max_{v \in \{0,1\}^n}v^\t M_{\mathrm{NG}} v$. Now let $u_1,\dots, u_n$ be the (real) orthonormal eigenvectors of $M_{\mathrm{NG}}$, then $M_{\mathrm{NG}} = \sum_i \lambda_i(M_{\mathrm{NG}}) u_iu_i^\t$ and 
$v^\t M_{\mathrm{NG}} v = \sum_i \lambda_i(M_{\mathrm{NG}}) (u_i^\t v)^2$. 
If $v$ could be chosen to be proportional to $u_1$ then 
%the former sum would be maximized and it would equal  a positive multiple of $\lambda_1(M_{\mathrm{NG}})$. 
% % % df: 31.01.2015
the sum would be maximized.
However the constraint $v \in \{0,1\}^n$ prevents us to such a simple choice and makes the optimization problem much more difficult. In fact it has been pointed out in several works, as for instance  \cite{newman-eigenvectors,newman-modularity}, that it is  extremely unlikely that a simple procedure exists for finding the optimal $v \in \{0,1\}^n$. 
Spectral partitioning based methods essentially select $v$ accordingly with the sign of the elements in $u_1$, by setting $v_i =1$ if $(u_1)_i$ is positive (or nonnegative), and 
$v_i=0$ otherwise. Then the vertex set $V$  is partitioned into $P=\{i\in V\mid v_i=1\}$ and $N = \bar P$, and  $G(P)$ is proposed as an approximation of the module having maximal modularity in $G$. 

Although the described procedure proposes the subgraph $G(P)$ as a leading module, it can been shown that 
either $G(P)$ or $G(N)$ are connected subgraphs of $G$, depending on the orientation of $u_1$ \cite[Thm. 4.2]{FT14}. However (and unfortunately) if the sign of $u_1$ is chosen so that $G(P)$ is connected, it is not possible to ensure that $G(N)$ is connected as well, at least in the general case. Counterexamples are given in \cite{FT14}
% % % df: 31.01.2015
% , based on a \textit{star of cliques} and a \textit{weighted star} graphs.
and in the subsequent Remark \ref{rem:star}.

The described procedure provides a reasonably good bipartition of $G$. Typical networks, however, require a division into more than two parts, so a natural extension of the spectral method described so far has been proposed. Such idea was probably introduced by Newman in \cite{newman-modularity} and is at the basis of most of the modern algorithms for communities detection, see e.g. the renowned Louvain method \cite{Louvain_method}. We call this procedure \textit{Successive Spectral Graph Bipartition} algorithm (SSGB) and we briefly sketch it hereafter.

The spectral method previously described is used to divide the network into two parts $P$ and $N$, so that $V = P \cup N$.  Then those parts are bipartitioned again into $P_1$, $N_1$, $P_2$, and $N_2$ so that $P = P_1\cup N_1$, $N =P_2 \cup N_2$, and so forth. The crucial step here is that each time the modularity matrix for the subgraphs $G(P_i)$ and $G(N_i)$ must be considered, and of course it can not be done by simply considering the principal submatrices 
$M_{\mathrm{NG}}(P_i)$ and $M_{\mathrm{NG}}(N_i)$ respectively, since the degrees of vertices in the subgraphs change when some edge is removed. Instead, for each subset $S\subseteq V$ and respective subgraph $G(S)$, a new modularity matrix  $M^{S}_{\mathrm{NG}}$ is defined by setting
\begin{equation}\label{eq:sub-modularity}
	\textstyle{M^{S}_{\mathrm{NG}} = M_{\mathrm{NG}}(S) -\left(D_{G(S)}-\frac{\vol S}{\vol G}D(S)\right)}
\end{equation}
where $D$ is the diagonal matrix of the degrees of original graph $G$, whereas $D_{G(S)}$ is  the diagonal matrix of the degrees of the considered subgraph $G(S)$. The SSGB procedure stops when the computed modularity matrix $M^{S}_{\mathrm{NG}}$ has no positive eigenvalues. 
It is worth noting that already this very crucial  procedure   generates matrices whose structure is 
quite different with respect  the structure of $M_{\mathrm{NG}}$, due to the diagonal term. Thus the connectedness of subgraphs it produces is not  ensured anymore. However, all the matrices therein considered are generalized modularity matrices, as we will better discuss throughout the end of Section \ref{sec:main}. 

% ----------------------------------------------------
\subsection{A normalized variant of $M_{\mathrm{NG}}$}
\label{sec:normalized-M}
% ----------------------------------------------------

Let $D = \mathrm{Diag}(d_1, \dots, d_n)$ be the diagonal matrix of the degrees of the graph $G$. In analogy with the renowned normalized Laplacian matrix of a graph
\cite{chung}, 
the  \textit{normalized} version of the Newman--Girvan  modularity matrix is defined by
$$
   M_{\mathrm{norm}} = D^{-1/2} M_{\mathrm{NG}} D^{-1/2}\, . 
$$
Even though that matrix is not very popular in the community detection literature, 
$M_{\mathrm{norm}}$ appears in various networks related questions as the analysis of quasi-randomness properties of graphs with given degree sequences,
see e.g., \cite{MR2371048} or \cite[Chap.\ 5]{chung}. 
It is straightforward to see that the modularity measure induced by $M_{\mathrm{NG}}$ can also be defined as a quadratic form associated with $M_{\mathrm{norm}}$. In fact, if $v = D^{1/2}\uno_S $ then
$$
   Q_{\mathrm{NG}}(S) =
   \uno_S^\t M_{\mathrm{NG}} \uno_S = 
   v^\t M_{\mathrm{norm}} v\, .
$$
The effect of the diagonal scaling becomes apparent when
considering Rayleigh quotients instead of quadratic forms. 
% % % df: 31.01.2015. Dopo i due punti non ci va sempre la maiuscola.
% % % Nel dubbio li tolgo.
Indeed, $\uno_S^\t M_{\mathrm{NG}} \uno_S/\uno_S^\t \uno_S = Q_{\mathrm{NG}}(S)/|S|$, whereas
$$
   \frac{v^\t M_{\mathrm{norm}} v}{v^\t v} = 
   \frac{\uno_S^\t M_{\mathrm{NG}} \uno_S}{\uno_S^\t D\uno_S} = 
   \frac{Q_{\mathrm{NG}}(S)}{\vol S}\,  .
$$ 
Note that  $A_{\mathrm{norm}} = D^{-1/2}AD^{-1/2}$ is a nonnegative irreducible matrix to which corresponds the symmetric weighted graph $G_{\mathrm{norm}}=(V,\hat E)$ whose weight function is $\hat E(i,j)=E(i,j)/\sqrt{d_id_j}$. 
Therefore  $M_{\mathrm{norm}}$ and $M_{\mathrm{NG}}$ share the crucial property 
of being a negative semidefinite rank-one correction of
the adjacency matrix of a graph.

% ----------------------------------------------------
\subsection{The matrix approach to the resolution limit}
\label{sec:res-limit}
% ----------------------------------------------------

Although modularity optimization techniques are very popular, recently it has been pointed out that they suffer a \textit{resolution limit},
see e.g., \cite{fortunato-res-limit-1, kumpula-res-limit, fortunato-res-limit-2}. In fact, it has been noted that  modularity maximization algorithms are inclined to merge small clusters into larger modules.
% and, in some cases, also to break large clusters.
% , especially in real-life networks
% , and it seems basically impossible to avoid both
% biases simultaneously. Some 
Various alternative modularity measures have been proposed in recent years, essentially based on the introduction of a 
tunable scaling coefficient $\gamma$ (usually called \textit{resolution parameter}) or on the insertion of weighted selfloops.

Starting from a statistical mechanics approach which interprets
community detection as finding the ground state of a spin system,
Reichardt and Bornholdt introduced in \cite{multires-1} 
a parametrized modularity measure for $S \subseteq V$.
In our notations, that definition reads
$$
   Q_\gamma(S) = e_{\mathrm{in}}(S) -
   (\gamma/\vol G) (\vol S)^2 ,
%  \uno_S (A - (\lambda/\vol G) dd^\t)\uno_S ,
$$
where $\gamma>0$ is the resolution parameter and 
$e_{\mathrm{in}}(S)$ is as in \eqref{eq:e_in}.
We observe that, introducing the matrix
\begin{equation}   \label{eq:def_RB}
   M_{\mathrm{RB}} = A - (\gamma/\vol G) dd^\t ,
\end{equation}
then $Q_\gamma(S) = \uno_S^\t M_{\mathrm{RB}} \uno_S$,
and when $\gamma = 1$ then we recover Newman--Girvan modularity matrix \eqref{eq:modularity-matrix}.
Also from statistical mechanics considerations the 
parametrized modularity function
$$
  Q_\gamma(S) = e_{\mathrm{in}}(S) - \gamma |S|^2
$$
has been considered by Ronhovde and Nussinov in \cite{Ronhovde_Nussinov} as well as other authors, see e.g.,
\cite{RB2004,TVDN_CPM}, possibly with minor notational variations
or scaling factors.
By defining the matrix
$$
   M_{\mathrm{RN}} = A - \gamma\uno\uno^\t
$$
we can express the previous modularity function as
$Q_\gamma(S) =  \uno_S^\t M_{\mathrm{RN}} \uno_S$.
Another approach has been proposed in \cite{multires-2} by Arenas, Fernandes and Gomez. The following matrix is suggested as an alternative to the original Newman--Girvan modularity:
$$
   M_{\mathrm{AFG}} = A + \gamma I - 
   \frac{(d+\gamma\uno)(d+\gamma\uno)^\t}{\gamma n + \vol G} ,
$$
where $\gamma\in\mathbbm{R}$ is the resolution parameter, $n$ is the number of nodes of the graph and $d$ is the degree vector as usual. Note that the matrix $A+\gamma I$ is the adjacency matrix of $G$ where a self-loop with weight $\gamma$ is added to each node, so that $M_{\mathrm{AFG}}$ is nothing but the
Newman--Girvan matrix of the graph updated by 
the added loops.

% ---------------------------------------------------------------
\subsection{Generalized modularity matrices and measures}
% ---------------------------------------------------------------

Motivated by the aforementioned definitions, we consider the following generalization of 
the Newman--Girvan modularity matrix:

\begin{definition}\label{def:gen-mod}
Let $A$ be the the adjacency matrix of an undirected,
connected graph, possibly endowed by loops and weighted edges, let $\diag$ be a real diagonal matrix, let $v\neq 0$ be a nonnegative vector, 
and let $\sigma$
be a positive scalar. The matrix $M =  A + \diag - \sigma vv^T$
is a {\em generalized modularity matrix}. 
\end{definition}

According to Definition \ref{def:gen-mod}, it 
is clear that all previously defined modularity-type matrices 
$M_{\mathrm{NG}}$, $M_{\mathrm{norm}}$, $M_{\mathrm{RB}}$, $M_{\mathrm{AFG}}$ and $M_{\mathrm{RN}}$
are indeed generalized modularity matrices.

Hereafter, we adopt the notation $Q(S)$
to indicate the modularity measure of $S\subseteq V$ corresponding to (or induced by) a given generalized modularity matrix $M$,
that is,
$$
   Q(S) = \uno_S^\t M\uno_S\, .
$$
Remark that, if $G = (V,E)$ and 
$\diag = \mathrm{Diag}(w_1\ldots,w_n)$ then
the resulting expression for the modularity of $S$ is 
$$
   Q(S) = e_{\mathrm{in}}(S) + \sum_{i\in S}w_i
   - \sigma \bigg( \sum_{i\in S}v_i\bigg)^2  ,
$$
where $e_{\mathrm{in}}(S)$ is as in \eqref{eq:e_in}.
Thus, the diagonal matrix $\diag$ establishes 
a weight on each node; and the value of $Q(S)$ includes the sum
of all node weights in $S$. 

Finally, as it will play a crucial role in forthcoming discussions, we borrow from \cite{FT14} the notation
$$
   m_G = \lambda_1(M)
$$
to denote the leading 
(i.e., rightmost) eigenvalue of a generalized modularity matrix 
$M$ associated to the graph $G$. 
Owing to the inequality
$$
   m_G = \max_{x\neq 0} \frac{x^\t M x}{x^\t x} 
   \geq \frac{\uno_S^\t M \uno_S}{\uno_S^\t\uno_S}
   = \frac{Q(S)}{|S|} ,
$$
the existence of a module in $G$ implies that $m_G >0$.
Moreover, $m_G$ is an upper bound for the 
``relative modularity'' $Q(S)/|S|$.

Furthermore, for any two disjoint subsets $S,T\subseteq V$ 
we will consider their
{\em joint modularity} $Q(S,T) = \uno_S^\t M \uno_T$.
Note that $Q(S\cup T) = Q(S) + Q(T) + 2Q(S,T)$.
In particular, $Q(S\cup T) \geq Q(S) + Q(T)$
if and only if $Q(S,T) \geq 0$.

% =========================================================
\section{Nodal domains of leading eigenvectors}\label{sec:main}
% =========================================================

Given a nonzero vector $v\in\mathbbm{R}^n$ the 
% % % df: 31.01.2015. Questo mi sembra piu' preciso:
subgraph $G(S)$ induced by the 
set $S = \{i : v_i \geq 0\}$ is a {\em nodal domain} of $v$
\cite{nodal-domain-theorem,duval-nodal-domains}.
This fundamental definition admits obvious variations
(for example, inequality can be strict, or reversed)
and, since the seminal papers by Fiedler \cite{fiedler-connectivity,fiedler-vector},
it has become the a major tool of 
spectral methods in community detection and graph partitioning
\cite{newman-eigenvectors,powers-graph-eigenvector,Schaeffer2007}.
Indeed, nodal domains of eigenvectors
of Laplacian or modularity matrices are commonly utilized 
in order to localize subgraphs having sought properties.

In this section
% % % df: 31.01.2015.  
we consider nodal domains of the leading eigenvector
of generalized modularity matrices. In particular, the forthcoming 
Theorem \ref{thm:main} is 
a modularity matrix counterpart of Fiedler's theorem \cite[Thm. 3.3]{fiedler-vector} 
about Laplacian matrices. 

\begin{lemma}\label{lem:-1}
Let $A\geq O$ be irreducible and let $\diag$ be any real diagonal matrix. Then $\lambda_1(A+\diag)$ is simple and admits a 
% (unique up to multiples) 
positive eigenvector.
\end{lemma}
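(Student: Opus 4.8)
The plan is to reduce the statement to the Perron--Frobenius theorem for nonnegative irreducible matrices by means of a diagonal shift. First I would fix a scalar $c>0$ large enough that the matrix $B = A + \diag + cI$ has only nonnegative entries. Since the off-diagonal part of $B$ coincides with that of $A\geq O$, the only entries that could be negative are the diagonal ones, and these are made nonnegative as soon as $c$ exceeds the largest absolute value among the diagonal entries of $A+\diag$; with such a choice $B\geq O$.

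Next I would observe that $B$ is irreducible. Irreducibility depends only on the zero pattern of the off-diagonal entries of a matrix, and $B$ and $A$ have exactly the same off-diagonal entries; hence $B$ inherits irreducibility from $A$. The Perron--Frobenius theorem then applies to $B$: its spectral radius $\rho(B)$ is a simple eigenvalue and admits an eigenvector $u$ with $u>0$.

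Finally I would transfer this back to $A+\diag$. Because $A$ is symmetric (it is the adjacency matrix of an undirected graph) and $\diag + cI$ is diagonal, $B$ is symmetric, so all its eigenvalues are real and $\rho(B)=\lambda_1(B)$ is the largest one. Since $B = (A+\diag)+cI$, we have $\lambda_i(B)=\lambda_i(A+\diag)+c$ for every $i$, with the very same eigenvectors; in particular $\lambda_1(A+\diag)=\rho(B)-c$ is a simple eigenvalue of $A+\diag$ and $u>0$ is an associated eigenvector, which is exactly the assertion.

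The argument is essentially routine; the only point deserving a little care is to check that a single shift $cI$ simultaneously restores nonnegativity of $B$ and preserves its irreducibility, so that Perron--Frobenius is genuinely applicable. The remaining facts — that the shift leaves eigenvectors unchanged and merely translates the spectrum by $c$, so that $\lambda_1(B)=\lambda_1(A+\diag)+c$ — are immediate.
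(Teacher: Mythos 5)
Your proposal is correct and follows essentially the same route as the paper: shift by a suitable multiple of the identity so that $A+\diag+cI$ is nonnegative (and still irreducible, since the off-diagonal pattern is unchanged), apply the Perron--Frobenius theorem, and translate the spectrum back. The paper's proof is just a more compressed version of exactly this argument.
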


\begin{proof}
As $\diag$ is a real diagonal matrix, there exists a nonnegative scalar $\alpha$ such that the shifted matrix $A+ \diag + \alpha I$ is nonnegative and irreducible. Then the Perron--Frobenius theorem implies the thesis. 
\end{proof}

\begin{lemma}   \label{lem:0}
Let $M = A + \diag - \sigma vv^\t$ be a generalized modularity matrix. 
Then $m_G < \lambda_1(A+\diag)$. 
\end{lemma}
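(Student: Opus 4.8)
The plan is to compare the largest eigenvalue of $M = A + \diag - \sigma vv^\t$ with that of the ``unperturbed'' matrix $A + \diag$ by exploiting the fact that $\sigma vv^\t$ is a positive semidefinite rank-one term and that $A + \diag$ has a strictly positive Perron eigenvector. First I would invoke Lemma~\ref{lem:-1}: since $A$ is irreducible and nonnegative and $\diag$ is diagonal, $\lambda_1(A+\diag)$ is simple with a positive eigenvector, call it $u > 0$, normalized so that $u^\t u = 1$. By Weyl's inequality \eqref{eq:Weyl} applied with the rank-one subtraction (equivalently, by the variational principle, since subtracting a positive semidefinite matrix cannot increase any eigenvalue) we already get the weak inequality $m_G = \lambda_1(M) \le \lambda_1(A+\diag)$. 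The whole point is to upgrade this to a strict inequality.

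The key step is to rule out equality. Suppose $m_G = \lambda_1(A+\diag) =: \lambda$. Let $x$ be a unit eigenvector of $M$ for $m_G$. Then
$$
\lambda = x^\t M x = x^\t (A+\diag) x - \sigma (v^\t x)^2 \le \lambda_1(A+\diag) - \sigma (v^\t x)^2 = \lambda - \sigma(v^\t x)^2 ,
$$
which forces $\sigma (v^\t x)^2 = 0$, hence $v^\t x = 0$ because $\sigma > 0$. But then $x^\t (A+\diag) x = \lambda = \lambda_1(A+\diag)$, so $x$ is also a leading eigenvector of $A+\diag$; by simplicity (Lemma~\ref{lem:-1}) $x$ must be a scalar multiple of the positive Perron vector $u$. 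Consequently $v^\t x$ is a nonzero multiple of $v^\t u$, and since $v \ge 0$, $v \ne 0$, and $u > 0$ we have $v^\t u > 0$, contradicting $v^\t x = 0$. Therefore the inequality is strict, $m_G < \lambda_1(A+\diag)$.

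The only genuinely delicate point is the appeal to simplicity of $\lambda_1(A+\diag)$ together with positivity of its eigenvector: without simplicity one could not conclude that the eigenvector $x$ of $M$ realizing the maximum is proportional to the Perron vector, and without strict positivity of $u$ one could not derive $v^\t u > 0$ from $v \ge 0$, $v \ne 0$. Both of these are exactly what Lemma~\ref{lem:-1} provides, so the argument closes. I would present the proof essentially in the two-line ``suppose equality'' form above, citing \eqref{eq:Weyl} (or the variational characterization) for the non-strict bound and Lemma~\ref{lem:-1} for the rigidity that produces the contradiction.
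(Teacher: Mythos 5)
Your proof is correct and follows essentially the same route as the paper: Weyl's inequality for the non-strict bound, then ruling out equality by forcing $v^\t x=0$ and showing that $x$ would then have to be the (positive, simple) Perron eigenvector of $A+\diag$ from Lemma~\ref{lem:-1}, which is incompatible with $v^\t x=0$. The only cosmetic difference is that you extract $v^\t x=0$ from the quadratic form $x^\t Mx$ via the variational principle, whereas the paper pairs $x$ against the Perron vector $y$ of $A+\diag$ in the bilinear form $x^\t My$; both are sound.
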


\begin{proof}
Weyl's inequalities \eqref{eq:Weyl} give
$m_G \leq \lambda_1(A+\diag)$. 
Suppose by contradiction $m_G =\lambda_1(A+\diag)$. 
Let $x$ and $y$ be eigenvectors corresponding to $m_G$ and 
$\lambda_1(A+\diag)$, that is,
$Mx = m_G x$ and $(A+\diag) y = \lambda_1(A + \diag)y$. 
By Lemma \ref{lem:-1} we can suppose $y > 0$. Hence,
$$
   m_G x^\t y = x^\t My = x^\t  
   (A + \diag -\sigma vv^\t )y =
   \lambda_1(A+\diag) x^\t y - \sigma(x^\t v)(v^\t y) .
$$
Thus $\sigma(x^\t v)(v^\t y) = 0$.
Since $\sigma(v^\t y) > 0$ we must have $x^\t v = 0$. Then,
$\lambda_1(A+\diag) x = m_G x = Mx = (A + \diag -\sigma vv^\t )x = (A+ \diag) x$.
Consequently, $x$ is an eigenvector of $A + \diag$  
corresponding to its first eigenvalue. Lemma \ref{lem:-1} implies either $x>0$ or $x<0$. In both cases 
$x^\t v = 0$ cannot hold.
\end{proof}

Interlacing properties between the spectra of $A+\diag$ and $M$ lead us immediately to the inequalities
$$
   \lambda_2(A+\diag) \leq m_G \leq \lambda_1(A+\diag) .
$$
The previous lemma shows that the rightmost inequality is always strict.
The next statement clarifies that, under common circumstances,
also the leftmost inequality is strict and $m_G$ is 
a simple eigenvalue of $M$.

\begin{theorem}
If $M = A + \diag - \sigma vv^\t$ is a generalized modularity matrix and $v$ is not
a leading eigenvector of $A+\diag$ then $m_G$
is a simple eigenvalue.
\end{theorem}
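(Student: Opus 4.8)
The plan is to argue by contradiction: suppose $m_G$ is \emph{not} simple and set $U=\ker(M-m_G I)$, so that $\dim U\ge 2$.

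\emph{Step 1: locating $m_G$.} Applying Weyl's inequalities \eqref{eq:Weyl} with $i=1$ to the pair $M$, $A+\diag=M+\sigma vv^\t$ gives $m_G=\lambda_1(M)\ge\lambda_2(A+\diag)\ge\lambda_2(M)$. Since $\dim U\ge 2$ forces $\lambda_2(M)=m_G$, this chain collapses to $m_G=\lambda_2(A+\diag)$; and $m_G<\lambda_1(A+\diag)$ by Lemma \ref{lem:0}.

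\emph{Step 2: an eigenvector of $A+\diag$ inside $U$.} The linear form $x\mapsto v^\t x$ vanishes on a subspace of $U$ of dimension at least $\dim U-1\ge 1$; pick a nonzero $w$ in it. From $Mw=m_G w$ and $v^\t w=0$ one gets $(A+\diag)w=m_G w$, so $w$ is an eigenvector of $A+\diag$ associated with $\lambda_2(A+\diag)$. By Lemma \ref{lem:-1} the eigenvalue $\lambda_1(A+\diag)$ is simple with a positive eigenvector $y$, and since $m_G<\lambda_1(A+\diag)$, orthogonality of the eigenspaces of the symmetric matrix $A+\diag$ yields $y^\t w=0$; in particular $w$ has entries of both signs. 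Note also that $y\notin U$: were $My=m_G y$, it would rearrange to $(\lambda_1(A+\diag)-m_G)\,y=\sigma(v^\t y)\,v$, and as $\lambda_1(A+\diag)-m_G>0$ and $v^\t y>0$ (because $v\ge 0$, $v\neq 0$, $y>0$) this would make $v$ a positive multiple of $y$, contradicting the hypothesis.

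\emph{Step 3: the crux.} It remains to contradict $\dim U\ge 2$. Every $x\in U$ satisfies $(A+\diag)x=m_G x+\sigma(v^\t x)\,v$; pairing this with $y$ shows that on $U$ the forms $x\mapsto y^\t x$ and $x\mapsto v^\t x$ are proportional with the strictly positive ratio $c=\sigma(v^\t y)/(\lambda_1(A+\diag)-m_G)$, so $U\subseteq(y-cv)^\perp$. Feeding this proportionality back into the eigen-relation and combining it with $v\ge 0$ and the irreducibility of $A$, I would run a Perron--Frobenius argument in the spirit of Lemmas \ref{lem:-1}--\ref{lem:0} to force $\dim U\le 1$. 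I expect this to be the main obstacle: nonnegativity of $v$ is essential, since a negative semidefinite rank-one perturbation of a \emph{general} symmetric matrix can have a degenerate largest eigenvalue, so the argument must genuinely use the sign structure, and in particular must settle the delicate case in which $v$ is orthogonal to the entire $\lambda_2(A+\diag)$-eigenspace.
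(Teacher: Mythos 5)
Your Steps 1 and 2 are correct, and they essentially reorganize the paper's own argument: the paper reaches the same point via the Courant--Fischer characterization of $\lambda_2(A+\diag)$, concluding that failure of simplicity forces $m_G=\lambda_2(A+\diag)$ and then attempting to exclude the degenerate configurations. But Step 3 is a plan, not a proof, and the case you yourself flag as ``delicate'' --- $v$ orthogonal to the whole $\lambda_2(A+\diag)$-eigenspace --- is not merely delicate: it is where the statement actually fails. Take the star graph of Remark~\ref{rem:star} with $m=4$, so $A=2I+S$ with $S$ the $5\times 5$ star adjacency matrix, and set $\diag=O$, $v=\uno$, $\sigma=1$ (this is $M_{\mathrm{RN}}$ with $\gamma=1$). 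Ordering the center first, $M$ is block diagonal with blocks $(1)$ and $2I_4-\uno\uno^\t$, so its spectrum is $\{2,2,2,1,-2\}$: here $m_G=2$ has multiplicity three, while $\uno$ is not even an eigenvector of $A$ (the Perron vector is $(2,1,1,1,1)^\t$). In this configuration every $x\in U$ satisfies $v^\t x=0$, hence also $y^\t x=c\,v^\t x=0$, so your inclusion $U\subseteq(y-cv)^\perp$ carries no information, and no Perron--Frobenius argument can force $\dim U\le 1$.

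For comparison, the paper's proof has the same blind spot: equality of $\max_{v^\t x=0}x^\t(A+\diag)x/x^\t x$ with $\lambda_2(A+\diag)$ does \emph{not} force $v$ to be a leading eigenvector of $A+\diag$ when $\lambda_2(A+\diag)$ is degenerate and $v$ is orthogonal to its eigenspace; in the example above the maximum of the Rayleigh quotient of $A$ over $\uno^\perp$ is exactly $2=\lambda_2(A)$. The statement becomes true --- and the proof collapses to two lines --- under the stronger hypothesis that $v$ is \emph{not orthogonal to the $\lambda_2(A+\diag)$-eigenspace}: pick $u$ with $(A+\diag)u=\lambda_2(A+\diag)u$ and $v^\t u\neq 0$, and set $x=u-\bigl((v^\t u)/(v^\t y)\bigr)y$ with $y$ the Perron vector of $A+\diag$. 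Then $v^\t x=0$, so $x^\t Mx/x^\t x=x^\t(A+\diag)x/x^\t x>\lambda_2(A+\diag)$ because $x$ has a nonzero component along $y$; hence $m_G>\lambda_2(A+\diag)\ge\lambda_2(M)$ by Weyl's inequality \eqref{eq:Weyl}, and $m_G$ is simple. Note that this hypothesis implies the stated one, since a leading eigenvector of $A+\diag$ is automatically orthogonal to the $\lambda_2$-eigenspace.
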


\begin{proof}
For an arbitrary vector $x$ we have
$x^\t M x = x^\t (A+\diag) x - \sigma(v^\t x)^2$. 
From Courant--Fisher's minimax theorem,
\begin{align*}
   m_G & = \max_{x\neq 0}\frac{x^\t M x}{x^\t x}
   \geq \max_{v^\t x= 0}\frac{x^\t (A+\diag) x}{x^\t x} \\
   & \geq \min_{z\neq 0}\max_{z^\t x= 0}\frac{x^\t (A+\diag) x}{x^\t x}
   = \lambda_2(A+\diag) .
\end{align*}
Thus we may have $m_G = \lambda_2(A+\diag)$ only if
the two preceding inequalities hold as equalities, that is,
$v^\t x = 0$ where $x$ is an eigenvector of $M$ associated to 
$m_G$, and $(A+\diag)v = \lambda_1(A+\diag)v$,
owing to orthogonality of eigenvectors of a symmetric matrix.
However, if the latter equation is verified, then
$v$ is also an eigenvector of $M$. Indeed,
$$
   \lambda_1 (A+\diag)v =  
   (A+\diag)v = Mv + \sigma(v^\t v) v ,
$$
whence $Mv = (\lambda_1(A+\diag) - \sigma v^\t v) v$.
Consequently, the equation $v^\t x = 0$ is redundant, again owing
to orthogonality of eigenvectors.
Finally, if $m_G$ is not simple then
$M$ has at least two eigenvalues strictly greater than $\lambda_2(A+\diag)$, which contradicts Weyl's inequalities \eqref{eq:Weyl},
and the proof is complete.
\end{proof}

\begin{lemma}   \label{lem:1}
Let $M = A + \diag - \sigma vv^\t$ be a generalized modularity matrix.
Let $Mx = m_G x$ 
and the eigenvector $x$ oriented so that $v^\t x \ge 0$.
Then, $S = \{ i: x_i \ge 0\}$ induces a connected subgraph.
\end{lemma}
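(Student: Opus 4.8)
The plan is to argue by contradiction. Suppose $G(S)$ is disconnected; since $G$ is connected this forces $\bar S=V\setminus S\neq\emptyset$ (otherwise $S=V$ and $G(S)=G$ is connected), so write $S=C_1\cup\cdots\cup C_k$ with $k\ge 2$, where $C_1,\dots,C_k$ are the connected components of $G(S)$. Two structural facts will be used: (i) no edge of $G$ joins $C_l$ to $S\setminus C_l$, because $C_l$ is a component of $G(S)$; (ii) since $G$ is connected and $C_l\subsetneq V$, at least one edge of $G$ joins $C_l$ to $\bar S$.

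First I would read the eigenvalue equation block by block. Rewriting $Mx=m_Gx$ as $(A+\diag)x=m_Gx+\sigma(v^\t x)v$ and restricting to the rows indexed by $C_l$, fact (i) together with $\diag$ being diagonal makes every column outside $C_l$ collapse to the terms from edges to $\bar S$, which are entrywise nonpositive because $x_{\bar S}<0$; hence
\begin{equation*}
   \big(A(C_l)+\diag(C_l)\big)\,x_{C_l}=m_G\,x_{C_l}+\eta_l,\qquad
   \eta_l:=\sigma(v^\t x)\,v_{C_l}-A_{C_l,\bar S}\,x_{\bar S},
\end{equation*}
where subscripts denote restrictions and $A_{C_l,\bar S}$ is the block of $A$ with rows in $C_l$ and columns in $\bar S$. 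This is where the orientation hypothesis enters: $\sigma(v^\t x)\ge 0$, $v\ge 0$ and $x_{\bar S}<0$ give $\eta_l\ge 0$ componentwise. Moreover $\eta_l\neq 0$: if $\eta_l=0$ then $A_{C_l,\bar S}x_{\bar S}=0$, and since $x_{\bar S}<0$ strictly this forces $A_{C_l,\bar S}=0$, contradicting fact (ii); the displayed identity then also gives $x_{C_l}\neq 0$.

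Next I would apply Perron--Frobenius to each block. Since $G(C_l)$ is connected, $A(C_l)+\diag(C_l)$ is symmetric with nonnegative off-diagonal entries and irreducible, so by Lemma~\ref{lem:-1} it has a positive leading eigenvector $w_l$. Taking the inner product of the displayed identity with $w_l$ and using symmetry,
\begin{equation*}
   \big(\lambda_1(A(C_l)+\diag(C_l))-m_G\big)\,w_l^\t x_{C_l}=w_l^\t\eta_l>0 ,
\end{equation*}
and since $w_l>0$ while $x_{C_l}\ge 0$ (because $C_l\subseteq S$) and $x_{C_l}\neq 0$, we have $w_l^\t x_{C_l}>0$, whence $\lambda_1(A(C_l)+\diag(C_l))>m_G$ for every $l$. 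Because $A(S)+\diag(S)$ is block diagonal with the $k\ge 2$ blocks $A(C_l)+\diag(C_l)$, its spectrum contains at least $k$ eigenvalues exceeding $m_G$, so $\lambda_2(A(S)+\diag(S))>m_G$; since $A(S)+\diag(S)$ is a principal submatrix of $A+\diag$, interlacing \eqref{eq:interlacing2} gives $\lambda_2(A+\diag)\ge\lambda_2(A(S)+\diag(S))>m_G$, contradicting the inequality $\lambda_2(A+\diag)\le m_G$ already recorded above (a consequence of Weyl's inequalities \eqref{eq:Weyl} for $A+\diag=M+\sigma vv^\t$).

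The step I expect to be the main obstacle is pinning down the sign of $\eta_l$: the boundary term $-A_{C_l,\bar S}x_{\bar S}$ is nonnegative for free, whereas the rank-one term $\sigma(v^\t x)v_{C_l}$ is nonnegative \emph{only} by virtue of the chosen orientation $v^\t x\ge 0$ — this is precisely where the hypothesis is indispensable, in line with the fact (noted in \cite{FT14}) that the complementary set $\{i:x_i<0\}$ need not induce a connected subgraph. A secondary point requiring care is the strict non-vanishing $\eta_l\neq 0$, which is what ties the argument to the connectedness of $G$ through fact (ii).
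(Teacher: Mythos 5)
Your proof is correct and follows essentially the same route as the paper's: restrict the eigen-equation to each connected component of $G(S)$, exploit the orientation $v^\t x\ge 0$ and $x_{\bar S}<0$ to get a nonnegative, nonzero remainder, pair with the positive Perron eigenvector of each block to conclude $\lambda_1(A(C_l)+\diag(C_l))>m_G$, and then contradict $\lambda_2(A+\diag)\le m_G$ via interlacing and Weyl. The only differences are cosmetic (you keep an exact identity with remainder $\eta_l$ where the paper uses a componentwise inequality, and you treat all $k$ components instead of just two), and you are slightly more careful about the degenerate cases $S=V$ and $x_{C_l}=0$.
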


\begin{proof}
By hypotheses, we have the componentwise inequality
$m_G x = M x = (A + \diag)x - (\sigma v^\t x) v \leq (A+\diag) x$.

By contradiction, assume that $S$ induces $2$ disjoint
connected subgraphs,
say $G(S_1)$ and $G(S_2)$. 
Reorder and partition consistently $A$, $\diag$, $M$, and $v$
in such a way that $S_1 = \{1,\ldots,n_1\}$, and
$S_2 = \{n_1+1,\ldots,n_2\}$. 
Consider the first $n_2$ equations in the inequality
$m_G x \leq (A+\diag) x$:
\begin{align*}
	\begin{pmatrix} m_G x_1 \\ m_G x_2 \end{pmatrix}
	& \leq \begin{pmatrix}
	A_{11}+\diag_{11} & & A_{13} \\ 
	& A_{22}+\diag_{22} & A_{23} \end{pmatrix}
	\begin{pmatrix} x_1 \\ x_2 \\ x_3 \end{pmatrix}
	= \begin{pmatrix} 
	A_{11}x_1 + \diag_{11}x_1 + A_{13} x_3 \\ 
	A_{22}x_2 + \diag_{22}x_2 + A_{23} x_3  
	\end{pmatrix} 
\end{align*}
Note that 
$x_3 < 0$ and $A_{i3} \neq O$ by irreducibility, for $i=1, 2$. In particular, we have strict 
inequality in at least one entry both in $S_1$ and in $S_2$.
 Let $y_1$ and $y_2$ be left eigenvectors of $A_{11}+\diag_{11}$ and
$A_{22}+\diag_{22}$, respectively such that:
$y_i^\t  (A_{ii} + \diag_{ii})= \lambda_1(A_{ii}+\diag_{ii}) y_i^\t $ for $i = 1,2$. Then,
$$
    m_G y_i^\t  x_i \leq 
    y_i^\t (A_{ii}+\diag_{ii})x_i + y_i^\t A_{i3}x_3
    < y_i^\t  (A_{ii}+\diag_{ii})x_i
    = \lambda_1(A_{ii}+\diag_{ii}) y_i^\t  x_i , 
$$
for $i = 1, 2$. Obviously, $y_i^\t x_i \geq 0$ 
since $y_i > 0$ by Lemma \ref{lem:-1} 
and $x_i \geq 0$ by hypothesis. Actually, due to the strict 
inequality above, we must have $y_i^\t x_i > 0$.
Thus both $\lambda_1(A_{11}+\diag_{11}) > m_G$ and $\lambda_1(A_{22}+\diag_{22}) > m_G$.
By eigenvalue interlacing inequalities \eqref{eq:interlacing2}, we conclude that 
$A+\diag$ has at least $2$ eigenvalues strictly larger than $m_G$, which contradicts Weyl's inequalities \eqref{eq:Weyl}.
\end{proof}

We can continue the argument in the previous proof as
follows. Let $y$ be any vector such that $(A + \diag)y \geq m_G y$.
For example, $y$ can be a positive eigenvector of $\lambda_1(A+\diag)$ since by Lemma \ref{lem:0} we know that $\lambda_1(A+\diag) > m_G$. 
Let $x+y = z$.
% = (z_1 \ z_2 \ z_3)^\t $ be partitioned consistently with $A$.
Thus $m_Gz \leq (A + \diag)z$ and, with arguments analogous to the ones exploited before, we obtain the following result.

\begin{theorem}\label{thm:main}
In the same hypotheses and notations of Lemma \ref{lem:1},
let $y$ be a positive eigenvector of $A+\diag$ 
corresponding to $\lambda_1(A+\diag)$.
Then, for any $\varepsilon \geq 0$,  
the set $S = \{i: x_i + \varepsilon y_i \geq 0\}$ 
induces a connected subgraph.
\end{theorem}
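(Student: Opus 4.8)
The plan is to imitate the proof of Lemma~\ref{lem:1} almost verbatim, replacing the eigenvector $x$ with the perturbed vector $z = x + \ep y$. First I would record that $z$ satisfies the same componentwise inequality that drove the previous argument: since $Mx = m_G x$ gives $(A+\diag)x \ge m_G x$ (using $\sigma v^\t x \ge 0$ and $v \ge 0$), and since $(A+\diag)y = \lambda_1(A+\diag) y \ge m_G y$ by Lemma~\ref{lem:0} (here $\lambda_1(A+\diag) > m_G$ and $y > 0$), adding these with nonnegative coefficients yields $(A+\diag)z \ge m_G z$. Note $z \ge 0$ because $x$ has $x_i \ge 0$ on $S$ is not quite what we want — rather, by definition the new set is $S = \{i : z_i \ge 0\}$, and on $S$ we have $z_i \ge 0$ by construction, which is all the earlier proof used.

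The core of the argument then runs as in Lemma~\ref{lem:1}. Suppose for contradiction that $G(S)$ splits into two disjoint connected subgraphs $G(S_1)$ and $G(S_2)$; reorder so that $S_1 = \{1,\dots,n_1\}$, $S_2 = \{n_1+1,\dots,n_2\}$, with the remaining indices (where $z_i < 0$) forming the block with components $z_3 < 0$. Looking at the first $n_2$ rows of $(A+\diag)z \ge m_G z$, the absence of edges between $S_1$ and $S_2$ means the $(1,2)$ and $(2,1)$ blocks of $A$ vanish, so for $i = 1,2$ we get $m_G z_i \le (A_{ii}+\diag_{ii}) z_i + A_{i3} z_3$. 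Irreducibility of $A$ forces $A_{i3} \neq O$, and since $z_3 < 0$ strictly, there is strict inequality in at least one coordinate of each block. Pairing with the positive left Perron eigenvector $y_i$ of $A_{ii}+\diag_{ii}$ (Lemma~\ref{lem:-1}) gives $m_G\, y_i^\t z_i < \lambda_1(A_{ii}+\diag_{ii})\, y_i^\t z_i$; since $y_i > 0$ and $z_i \ge 0$ with at least one strict coordinate inequality forcing $y_i^\t z_i > 0$, we conclude $\lambda_1(A_{ii}+\diag_{ii}) > m_G$ for both $i = 1, 2$. Interlacing \eqref{eq:interlacing2} then says $A+\diag$ has at least two eigenvalues exceeding $m_G$, contradicting Weyl's inequalities \eqref{eq:Weyl} exactly as before.

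The one point needing a little care — and the place I expect a referee to probe — is the deduction $y_i^\t z_i > 0$ from a strict inequality holding in \emph{some} coordinate of block $i$. The earlier proof handled this for $x$; for $z$ the same reasoning applies because $z \ge 0$ on each $S_i$ and the strict inequality occurs at a coordinate where the left eigenvector $y_i$ is strictly positive, so the weighted sum $y_i^\t z_i$ cannot be zero (if it were, $z_i = 0$ on all of $S_i$, but then $m_G \cdot 0 \le 0 + A_{i3} z_3 < 0$, impossible). A secondary subtlety is confirming that $\ep = 0$ is genuinely allowed, which is immediate since then $z = x$ and the statement reduces to Lemma~\ref{lem:1}; and for $\ep > 0$ the set $S = \{i : z_i \ge 0\}$ is nonempty (it contains the support-positive part of $y$), so the contradiction setup is not vacuous. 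No new ideas beyond Lemma~\ref{lem:1} are required — the whole content is that the inequality $(A+\diag)z \ge m_G z$ is preserved under adding a $\lambda_1(A+\diag)$-eigenvector, and that inequality is the only hypothesis the connectivity argument consumes.
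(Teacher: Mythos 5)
Your proposal is correct and follows exactly the route the paper intends: the paper itself only sketches this proof (``let $z = x + y$, then $m_G z \leq (A+\diag)z$, and argue as before''), and you have filled in that sketch faithfully, including the two points that genuinely need checking — that the componentwise inequality $(A+\diag)z \geq m_G z$ survives adding $\ep y$ because $\lambda_1(A+\diag) > m_G$ (Lemma~\ref{lem:0}) and $y>0$, and that $y_i^\t z_i > 0$ follows from the strict inequality in some coordinate. No further comment is needed.
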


\begin{remark}   \label{rem:star}
A connectedness result concerning the set $S = \{ i: x_i > 0\}$
where $x$ is an eigenvector as in the hypotheses of Lemma \ref{lem:1}
can be obtained only under 
% further additional assumptions, notably, $m_G$ being a simple eigenvalue. 
% % % df: 31.01.2015
% % % in effetti basta che $m_G$ sia semplice, vedi Thm. 4.2 di \cite{FT14}
the additional assumption that $m_G$ is simple.
Indeed, consider the following example:
Let $G$ be a star graph on $n = m+1$ nodes,
with every node endowed by a loop carrying the weight $\sqrt{m}$.
Its adjacency matrix is
$$
   A = \begin{pmatrix}
   \sqrt{m} & 1 & \cdots & 1 \\ 1 & \sqrt{m} \\
   \vdots & & \ddots \\ 1 & & & \sqrt{m} \end{pmatrix} .
$$
Easy computations show that $M_{\mathrm{NG}}$
has an $(m-1)$-fold leading eigenvalue $m_G$ equal to $\sqrt{m}$; 
every associated eigenvector is a zero-sum vector vanishing at the star center.
Consequently, if $x$ is a leading eigenvector of $M_{\mathrm{NG}}$
then $S = \{ i: x_i > 0\}$ is connected if and only if it reduces to a single node.
We will not pursue here this argument, and point the interested reader 
to Section 4 of \cite{FT14}. 
\end{remark}

% ----------------------------------------------------
\subsection*{An applications to community detection}
% ----------------------------------------------------

In this subsection we describe a major application of Theorem \ref{thm:main} to the community detection problem through the following Corollary \ref{cor:SSGB}. First of all let us underline that as soon as the modularity measure is induced by generalized modularity matrix $M$, it is reasonable to consider the SSGB procedure (see Section \ref{sec:motivations}) applied to $M$, in order to subdivide the graph into modules. However the definition of the matrix in \eqref{eq:sub-modularity} is not always well posed. The matrix $M_{\mathrm{NG}}^S$ therein considered is the 
Newman--Girvan modularity matrix associated to the subgraph $G(S)$ 
induced by $S$. 
However, the structure of a generalized modularity matrix $M=A+\diag -\sigma vv^\t$ might be only partially defined in terms of  $G$, as $\diag$, $v$ and $\sigma$ may be arbitrary.  Let us agree now that, if this is the case, then we 
denote with $M^{S}$ the principal submatrix of $M$ whose indices are in $S$.
Otherwise let $M^{S}$ denote the generalized modularity matrix defined in terms of $G(S)$. With this  notation the SSGB scheme  survives unchanged when $M_{\mathrm{NG}}$ is replaced by a generic $M$. 
The next corollary shows that Theorem \ref{thm:main} gives us informations on the connectivity of the modules produced by the SSGB method, whenever the modularity measure is induced by a generic $M$.

\begin{corollary}\label{cor:SSGB}
Let $M$ be any generalized modularity matrix. The spectral method applied to $M$ generates a pair of subgraphs, one of which is certainly connected. Similarly, the SSGB method applied to $M$ generates $m$ subgraphs, half of which is connected.
\end{corollary}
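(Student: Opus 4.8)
The plan is to show that this corollary is essentially an immediate consequence of Theorem~\ref{thm:main} together with the recursive structure of the SSGB scheme, so the proof should be short. For the first claim, I would recall that the spectral method applied to $M$ partitions $V$ into $P = \{i : (u_1)_i \ge 0\}$ and $N = \bar P$, where $u_1$ is a leading eigenvector of $M$. Orient $u_1$ so that $v^\t u_1 \ge 0$; then Lemma~\ref{lem:1} (equivalently Theorem~\ref{thm:main} with $\varepsilon = 0$) applies verbatim with $x = u_1$ and gives that $S = \{i : x_i \ge 0\} = P$ induces a connected subgraph. Hence at least one of $G(P)$, $G(N)$ is connected, namely the one corresponding to the sign orientation that makes $v^\t x \ge 0$.

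For the SSGB claim, the key observation is that the recursion is a binary tree of bipartitions: the root splits $V$ into two pieces, each of those splits into two, and so on, and the procedure stops at a node when the associated (generalized) modularity matrix has no positive eigenvalue. If the scheme produces $m$ leaf subgraphs, then it performed $m-1$ internal bipartition steps (a full binary tree with $m$ leaves has $m-1$ internal nodes). I would argue that at each internal bipartition step, the matrix in play is again a generalized modularity matrix: if $M^S$ is taken as the principal submatrix of $M$, then it has the form $A(S) + \diag(S) - \sigma\, v_S v_S^\t$ where $v_S$ is the restriction of $v$ to $S$, which is nonnegative, and $A(S)$ is the adjacency matrix of the connected graph $G(S)$ (connected because it arose as a connected nodal domain at the previous level, or because it is a component we may treat separately); if instead $M^S$ is the generalized modularity matrix built from $G(S)$, it is one by construction. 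Either way, the first part of the corollary applies to $M^S$, so each internal step outputs one connected child out of two. Since the $m$ leaves are exactly the children produced across the $m-1$ internal steps plus possibly the root-level pieces, a counting argument shows that at least $\lceil m/2 \rceil$ (``half'') of the leaves are connected: each internal node certifies one of its two children connected, and distinct internal nodes certify distinct children.

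The main obstacle, and the place where I would be most careful, is bookkeeping in the counting argument rather than any deep mathematics. One has to make sure the ``connected child'' guaranteed at a given internal node is not double-counted or later re-split in a way that loses the guarantee --- but re-splitting a connected $G(S)$ again yields (by the first part) one connected piece, so connectedness is never irreversibly lost, only possibly re-established at the next level. I would phrase the count as follows: build the binary recursion tree $T$; each internal node $u$ with children $u_L, u_R$ has, by the first part applied to $M^{S_u}$, at least one child whose induced subgraph is connected; mark that child. A child is a leaf or an internal node; marking propagates a connectedness certificate down to some leaf in any case, but to get the clean ``half'' bound it suffices to note that the $m$ leaves are partitioned by their parents into $m-1$ sibling pairs sharing parents (with one leaf possibly unpaired if $m$ is odd at the root), and in each pair at least one leaf-or-ancestor is marked. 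The cleanest statement is simply: among any two siblings at least one induces a connected subgraph, hence among the $m$ final subgraphs at least $\lceil m/2\rceil$ are connected, which is what ``half of which is connected'' means. I would then remark that the hypothesis of Theorem~\ref{thm:main} --- that the eigenvector be oriented with $v^\t x \ge 0$ --- is harmlessly available at every node since orientation is a free choice, and that loops and weights cause no difficulty because $A(S) \ge O$ and $v_S \ge 0$ are preserved under taking the relevant submatrices or rebuilding from $G(S)$, so Lemma~\ref{lem:1} keeps applying.
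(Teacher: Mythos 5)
Your argument is essentially identical to the paper's: both parts rest on applying Theorem~\ref{thm:main} (via Lemma~\ref{lem:1}) to the suitably oriented leading eigenvector, observing that each matrix $M^S$ arising in the SSGB recursion retains the form ``nonnegative symmetric plus diagonal plus negative semidefinite rank one,'' and then counting one connected member per sibling pair. Your extra bookkeeping on the binary recursion tree is a slightly more explicit rendering of the paper's terse ``$m/2$ pairs of subsets'' count, not a different route.
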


\begin{proof}
Let $M = A+\diag-\sigma vv^\t$. It is enough to observe that,  if $u$ is the eigenvector corresponding to the larger eigenvalue $m_G$ oriented so that $u^\t v \geq 0$ then, due to Theorem \ref{thm:main}, the set $S=\{i : u_i \geq 0\}$ 
defines a bipartition of the node set such that $G(S)$ is connected. 
However, we have no apriori control on the connectivity of the other set of the bipartition. 

A similar argument proves the thesis for the SSGB scheme. Due to the successive bipartitions, the algorithm produces  $m/2$ pairs of subsets. To be precise, at each step of the scheme, a subset $S\subseteq V$ is given, then the matrix $M^S$ %= M(S) -\left(D_{G(S)}-\frac{\vol S}{\vol G}D(S)\right)$ 
is computed and $S$ is partitioned into a pair of subsets being identified by the sign of the entries of the leading eigenvector of $M^S$.   
Note that, for any generalized modularity matrix $M$, the matrix $M^S$ has the form $M^S = A' + \diag' + R$, where $A'$ is a nonnegative symmetric matrix, $\diag'$ is diagonal and $R$ is a negative definite rank one matrix. 
Observe now that, if $u_S$ is an eigenvector corresponding to the largest eigenvalue of $M^S$, and its sign is chosen 
appropriately,
then the hypothesis of  Theorem \ref{thm:main} are satisfied. 
As a consequence the set $\tilde S =\{i \in S : (u_S)_i \geq 0\}$ induces a connected subgraph in $G(S)$, thus a connected subgraph in $G$.  
\end{proof}

% =========================================================
\section{A criterion for the leading eigenpair} 
% =========================================================

The classical Perron--Frobenius theory has been extended in various ways to matrices having some negative entries. 
One of such extensions, found in \cite{MR1823511},
allows us to predict the sign pattern in the leading eigenvector of a generalized modularity matrix.

\begin{lemma}   \label{lem:MR1823511}
Let $P\in\mathbbm{R}^{n\times n}$ be a symmetric matrix.
If $\uno^\t P\uno \geq \sqrt{(n-1)^2+1} \|P\|_\fro$ then
$\rho(P)$ is an eigenvalue of $P$ which is simple
and associated to a nonnegative eigenvector.
\end{lemma}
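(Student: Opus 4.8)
The plan is to show that the hypothesis forces the all-ones vector $\uno$ to be nearly a leading eigenvector of $P$, so that a leading eigenvector inherits the nonnegativity of $\uno$. One may assume $P\neq O$ and, after rescaling (both hypothesis and conclusion are invariant under multiplication of $P$ by a positive scalar), $\|P\|_\fro=1$. Put $K=\sqrt{(n-1)^2+1}$, write the spectral decomposition $P=\sum_i\lambda_iu_iu_i^\t$ with $\lambda_1\geq\cdots\geq\lambda_n$ and orthonormal $u_i$, and set $c_i=\uno^\t u_i$, so that $\sum_ic_i^2=n$ and $\uno^\t P\uno=\sum_i\lambda_ic_i^2\geq K$.

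First I would establish that $\lambda_1$ strictly dominates every other eigenvalue in modulus. The Rayleigh quotient of $\uno$ gives $\lambda_1\geq\uno^\t P\uno/n\geq K/n$, and since $\sum_i\lambda_i^2=\|P\|_\fro^2=1$ we have $\lambda_i^2\leq1-\lambda_1^2$ for each $i\geq2$; hence $\lambda_1^2-\lambda_i^2\geq2\lambda_1^2-1\geq2K^2/n^2-1=(n-2)^2/n^2$. For $n\geq3$ this is positive, so $\lambda_1>|\lambda_i|$ for all $i\geq2$; in particular $\lambda_1>0$, the spectral radius $\rho(P)$ equals $\lambda_1$, and $\lambda_1$ is a simple eigenvalue. (The cases $n\leq2$ are elementary.)

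The core estimate is $c_1^2\geq n-1$. Using Cauchy--Schwarz together with $\sum_{i\geq2}c_i^4\leq(\sum_{i\geq2}c_i^2)^2=(n-c_1^2)^2$, one bounds $\sum_{i\geq2}\lambda_ic_i^2\leq\sqrt{1-\lambda_1^2}\,(n-c_1^2)$, so that $K\leq\lambda_1c_1^2+\sqrt{1-\lambda_1^2}\,(n-c_1^2)$. Abbreviate $\mu=\lambda_1$ and $t=c_1^2$; since $\mu^2\geq K^2/n^2\geq1/2$ the right-hand side is nondecreasing in $t$, so if $t<n-1$ it would be strictly smaller than $(n-1)\mu+\sqrt{1-\mu^2}$, which in turn is at most $K$ because $\mu\mapsto(n-1)\mu+\sqrt{1-\mu^2}$ is concave with maximum value exactly $K$, attained at $\mu=(n-1)/K$. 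This contradiction yields $(\uno^\t u_1)^2=t\geq n-1$. I expect this to be the main obstacle: choosing the bound on $\sum_{i\geq2}\lambda_ic_i^2$ so that the constant $K$ comes out tight, and spotting the extremal identity that makes the analysis close exactly, are the delicate points, while the rest is bookkeeping.

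Finally I would convert the resulting angle bound into a sign statement. Orient $u_1$ so that $\uno^\t u_1\geq0$, and let $\theta\in[0,\pi/2]$ be the angle between $u_1$ and $\uno$; then $\cos^2\theta=c_1^2/n\geq(n-1)/n$, so $\sin^2\theta\leq1/n$. Writing $u_1=(\cos\theta/\sqrt n)\,\uno+(\sin\theta)\,w$ with $w\perp\uno$, $\|w\|=1$, and using that every coordinate of a unit vector orthogonal to $\uno$ is at most $\sqrt{(n-1)/n}$ in modulus, one obtains for each $j$ that $(u_1)_j\geq(\cos\theta-\sqrt{n-1}\,\sin\theta)/\sqrt n\geq0$, the last inequality because $\cos^2\theta\geq(n-1)/n\geq(n-1)\sin^2\theta$ and $\cos\theta,\sin\theta\geq0$. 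Hence $u_1\geq0$, which completes the argument.
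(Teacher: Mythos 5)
Your proof is correct, and it differs from the paper's in the strongest possible sense: the paper offers no argument at all, simply citing Theorem 4.1 of \cite{MR1823511}, whereas you reconstruct a complete, self-contained proof. Your chain of estimates --- the Rayleigh quotient giving $\lambda_1 \geq K/n$, the Frobenius constraint $\sum_i\lambda_i^2=1$ forcing $2\lambda_1^2-1\geq (n-2)^2/n^2$ and hence $\rho(P)=\lambda_1$ simple, Cauchy--Schwarz together with $\sum_{i\geq 2}c_i^4\leq (n-c_1^2)^2$ to trap $c_1^2\geq n-1$ via the extremal identity $\max_\mu\bigl((n-1)\mu+\sqrt{1-\mu^2}\bigr)=K$ attained at $\mu=(n-1)/K$, and finally the coordinate bound $|w_j|\leq\sqrt{(n-1)/n}$ for a unit vector $w\perp\uno$ --- checks out line by line for $n\geq 3$ (in particular the strict monotonicity in $t$ that your contradiction needs does hold there, since $K^2/n^2>1/2$ exactly when $n\neq 2$). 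What this buys is transparency about why the constant $\sqrt{(n-1)^2+1}$ is the right one, and it spares the reader a trip to the literature. One caveat about your parenthetical that the cases $n\leq 2$ are elementary: for $n=2$ the matrix $P=I$ satisfies the hypothesis (with equality) yet $\rho(P)$ is a double eigenvalue, and $P=O$ does the same for every $n$; so the lemma as transcribed fails in these degenerate cases and no ``elementary'' argument can cover them. That is a defect of the statement (the cited source effectively excludes it) rather than of your argument, but it is worth flagging rather than waving off.
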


\begin{proof}
See \cite[Thm.\ 4.1]{MR1823511}.
\end{proof}

Remark that the preceding lemma makes no assumptions
on signs and sizes of the entries of the matrix $P$.
In fact, various examples shown in \cite{MR1823511}
illustrate that the hypotheses of this lemma
can be fulfilled by matrices having some negative entries.
 
\begin{theorem}
Let $M\in\mathbbm{R}^{n\times n}$ be a 
generalized modularity matrix.
Let $S\subset V$ be a set fulfilling the inequality
$$
   Q(S) + Q(\bar S) - 2Q(S,\bar S) \geq
   \sqrt{(n-1)^2+1} \|M\|_\fro .
$$
Then $\rho(M) = m_G$ is a simple eigenvalue of $M$ which is 
associated to an eigenvector $x$ with the 
following property:
$S = \{i: x_i \geq 0\}$.
\end{theorem}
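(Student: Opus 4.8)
The plan is to apply Lemma~\ref{lem:MR1823511} to a suitably transformed copy of $M$. The matrix $M$ itself need not have $\rho(M)=m_G$ (the rightmost eigenvalue could be smaller in modulus than the leftmost one), so the first move is to relate the hypothesis on $S$ to the quantity $\uno^\t P\uno$ for a matrix $P$ obtained from $M$ by a sign change on the rows and columns indexed by $\bar S$. Concretely, let $D_S$ be the diagonal $\pm1$ matrix with $+1$ on entries of $S$ and $-1$ on entries of $\bar S$, and set $P = D_S M D_S$. Then $P$ is symmetric, $\|P\|_\fro = \|M\|_\fro$, and $P$ is orthogonally similar to $M$, so $\rho(P)=\rho(M)$ and the spectra coincide. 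The key computation is that $\uno^\t P\uno = (D_S\uno)^\t M (D_S\uno) = (\uno_S - \uno_{\bar S})^\t M(\uno_S-\uno_{\bar S})$, which expands as $Q(S) + Q(\bar S) - 2Q(S,\bar S)$ using the bilinear notation from Section~\ref{sec:motivations}. Hence the assumed inequality is exactly $\uno^\t P\uno \geq \sqrt{(n-1)^2+1}\,\|P\|_\fro$.

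Next I would invoke Lemma~\ref{lem:MR1823511} for $P$: it yields that $\rho(P)$ is a simple eigenvalue of $P$ with a nonnegative eigenvector $z\geq 0$. Transferring back, $x = D_S z$ is an eigenvector of $M$ for the eigenvalue $\rho(P)=\rho(M)$, which is therefore simple. Two things remain to be nailed down. First, that $\rho(M)$ equals $m_G = \lambda_1(M)$ rather than $-\lambda_n(M)$: since $M$ has a positive-modularity-type structure — indeed $A\geq O$ is irreducible, so $\lambda_1(M+\sigma vv^\t) = \lambda_1(A+\diag) > m_G > 0$ is not directly it, but one can argue that $m_G > 0$ whenever $n\geq 2$ because $Q(\{i\})/1 = w_i - \sigma v_i^2$ need not be positive; instead the cleanest route is to note that Lemma~\ref{lem:MR1823511} already asserts $\rho(P)$ \emph{is an eigenvalue} of $P$ with a nonnegative eigenvector, and a nonnegative (nonzero) vector cannot be orthogonal to the positive Perron vector of $M+\sigma vv^\t$; combined with the interlacing $\lambda_2(A+\diag)\le m_G$ from the text and the positivity forced by the hypothesis, one gets $\rho(P)=\lambda_1(P)=m_G$. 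Second, the sign pattern: from $x = D_S z$ with $z\geq 0$ we get $x_i \geq 0$ for $i\in S$ and $x_i \leq 0$ for $i\in\bar S$, so $\{i : x_i\geq 0\}\supseteq S$, and to get equality one must rule out $x_i = 0$ for $i\in\bar S$. This follows because $z$ is the simple Perron-type eigenvector of $P$ and $P = D_S M D_S$ has the property that $P + \sigma (D_S v)(D_S v)^\t = D_S(A+\diag)D_S$ has the same graph as $A$ up to the sign change, which is irreducible; hence $z$ is actually strictly positive (apply Lemma~\ref{lem:-1}-type reasoning to the shifted irreducible nonnegative matrix $P + \sigma(D_Sv)(D_Sv)^\t + \alpha I$ is not immediate because of the rank-one term, but one can instead argue directly from simplicity and the eigenvalue equation that $z$ has no zero entries, much as in the proof of Lemma~\ref{lem:1}).

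I expect the main obstacle to be precisely the last point — upgrading $z\geq 0$ to $z>0$, equivalently showing the inclusion $\{i:x_i\geq 0\}\supseteq S$ is an equality. Lemma~\ref{lem:MR1823511} as quoted only guarantees a nonnegative eigenvector, so the strictness has to be extracted either from the simplicity of $\rho(M)$ together with an irreducibility argument on $M + \sigma vv^\t$, or by invoking a stronger form of the result in \cite{MR1823511}. If the cited theorem in fact delivers a strictly positive eigenvector under the stated inequality (which is plausible, since the inequality is strict-ish and forces $P$ to behave like a genuinely Perron matrix), then this step is immediate and the whole proof is just the conjugation-by-$D_S$ bookkeeping above. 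Identifying $\rho(M)$ with $m_G$ is the other bookkeeping point but should be routine given the interlacing and positivity facts already in the paper.
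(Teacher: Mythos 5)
Your proposal is correct and follows exactly the paper's own proof: conjugate $M$ by the diagonal $\pm1$ matrix $J=D_S$, observe that $\uno^\t (JMJ)\uno = (\uno_S-\uno_{\bar S})^\t M(\uno_S-\uno_{\bar S}) = Q(S)+Q(\bar S)-2Q(S,\bar S)$ and $\|JMJ\|_\fro = \|M\|_\fro$, and apply Lemma \ref{lem:MR1823511} to $P=JMJ$, transferring the eigenvector back via $x=Jz$. Of the two points you flag, the first is immediate and needs none of the Perron-vector or interlacing detours you sketch --- the lemma asserts that $\rho(P)$ \emph{is} an eigenvalue of the symmetric matrix $P$, hence $\rho(P)=\lambda_1(P)=\lambda_1(M)=m_G$ --- while the second (upgrading $z\geq 0$ to $z>0$ so that $\{i:x_i\geq 0\}$ equals $S$ rather than merely containing it) is a genuine subtlety that the paper's one-line appeal to the lemma glosses over as well, so you were right to single it out.
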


\begin{proof}
Let $J$ be the diagonal matrix such that 
$J_{ii} = 1$ if $i\in S$ and $J_{ii} = -1$
otherwise.
Moreover, let $P = J M J$.
Observe that 
$$
   \uno^\t P \uno = 
   (\uno_S - \uno_{\bar S})^\t M (\uno_S - \uno_{\bar S})
   = Q(S) + Q(\bar S) - 2Q(S,\bar S) .
$$
On the other hand, $\|P\|_\fro = \|M\|_\fro$. Finally, 
$x$ is an eigenvector of $M$ if and only if $J x$
is an eigenvector of $P$. Thus the claim
follows from Lemma \ref{lem:MR1823511}.
\end{proof}

However, since for a modularity matrix $M$
the rightmost eigenvalue may not be equal to the spectral radius,
it is useful to derive a weakened version of the 
previous theorem which considers the matrix pencil $M + \alpha I$.

\begin{corollary}
Let $M\in\mathbbm{R}^{n\times n}$ be a 
generalized modularity matrix.
Let $S\subseteq V$ be a set fulfilling the inequality
$$
   Q(S) + Q(\bar S) - 2Q(S,\bar S) \geq
   \sqrt{(n-1)^2+1} \|M + \alpha I\|_\fro - n\alpha
$$
for some $\alpha \in\mathbbm{R}$.
Then the rightmost eigenvalue of $M$ is simple
and associated to an eigenvector $x$ such that 
$S = \{i: x_i \geq 0\}$.
\end{corollary}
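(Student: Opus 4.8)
The plan is to reduce the corollary to the preceding theorem by applying it to the shifted matrix $M + \alpha I$ instead of $M$ itself. First I would observe that $M + \alpha I = A + (\diag + \alpha I) - \sigma vv^\t$ is again a generalized modularity matrix, since $A$ still has nonnegative off-diagonal entries, $\diag + \alpha I$ is still a real diagonal matrix, and the rank-one term is unchanged; this uses only Definition \ref{def:gen-mod}. Next I would relate the modularity measure of $S$ with respect to $M + \alpha I$ to that of $M$: writing $Q_{\alpha}$ for the measure induced by $M + \alpha I$, one has $Q_\alpha(S) = \uno_S^\t(M + \alpha I)\uno_S = Q(S) + \alpha|S|$, and similarly $Q_\alpha(\bar S) = Q(\bar S) + \alpha|\bar S|$ and $Q_\alpha(S,\bar S) = Q(S,\bar S)$ because the off-diagonal shift contributes nothing to the joint term. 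Combining these, the quantity $Q_\alpha(S) + Q_\alpha(\bar S) - 2Q_\alpha(S,\bar S)$ equals $Q(S) + Q(\bar S) - 2Q(S,\bar S) + \alpha(|S| + |\bar S|) = Q(S) + Q(\bar S) - 2Q(S,\bar S) + n\alpha$.

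Then I would rewrite the hypothesis: the assumed inequality
$$
   Q(S) + Q(\bar S) - 2Q(S,\bar S) \geq
   \sqrt{(n-1)^2+1}\,\|M + \alpha I\|_\fro - n\alpha
$$
is, after adding $n\alpha$ to both sides, exactly
$$
   Q_\alpha(S) + Q_\alpha(\bar S) - 2Q_\alpha(S,\bar S) \geq
   \sqrt{(n-1)^2+1}\,\|M + \alpha I\|_\fro ,
$$
which is precisely the hypothesis of the previous theorem applied to the generalized modularity matrix $M + \alpha I$. Invoking that theorem, $\rho(M + \alpha I)$ is a simple eigenvalue of $M + \alpha I$ associated to an eigenvector $x$ with $S = \{i : x_i \geq 0\}$.

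Finally I would transfer the conclusion back to $M$. Since $M + \alpha I$ and $M$ have the same eigenvectors, the vector $x$ is an eigenvector of $M$, and $M x = (\lambda - \alpha)x$ where $\lambda$ is the eigenvalue of $M + \alpha I$ for $x$; simplicity is preserved under a scalar shift, and the sign pattern $S = \{i : x_i \geq 0\}$ is unchanged. The one point needing a short remark is that the theorem gives $\rho(M + \alpha I) = \lambda$, i.e. $x$ belongs to the spectral radius of the shifted matrix; but since $M + \alpha I$ has nonnegative off-diagonal entries, shifting by a further nonnegative constant makes it nonnegative, so $\rho(M + \alpha I)$ is its rightmost eigenvalue, and therefore $\lambda - \alpha$ is the rightmost eigenvalue of $M$. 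I expect this last step — making precise that ``spectral radius of $M+\alpha I$'' translates to ``rightmost eigenvalue of $M$'' — to be the only place where care is needed; the rest is the bookkeeping for $Q_\alpha$ in terms of $Q$, which is routine.
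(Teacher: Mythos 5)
Your proposal is correct and follows essentially the same route as the paper, which simply says to repeat the proof of the preceding theorem with $M$ replaced by $M+\alpha I$ and records that $\uno^\t P\uno$ then picks up the extra term $n\alpha$; your bookkeeping for $Q_\alpha$ is exactly that computation. One small caveat: your closing justification that ``$M+\alpha I$ has nonnegative off-diagonal entries'' is false in general (the off-diagonal entries of $M$ are $a_{ij}-\sigma v_iv_j$ and may be negative --- that is the whole point of a modularity matrix), but the step it supports is unnecessary: for a symmetric matrix, once $\rho(M+\alpha I)$ is known to be an eigenvalue it is automatically the rightmost one, and indeed the theorem you invoke already asserts $\rho(M+\alpha I)=\lambda_1(M+\alpha I)$, so subtracting $\alpha$ gives the rightmost eigenvalue of $M$ with the same (simple) eigenvector and sign pattern.
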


\begin{proof}
Repeat the argument in the previous proof
with the matrix $M$ replaced by $M + \alpha I$.
Note that, in this case, 
$\uno^\t P \uno = Q(S) + Q(\bar S) - 2Q(S,\bar S) + n\alpha$.
\end{proof}

Observe that the effect of introducing the shift $M+\alpha I$  is twofold:
If $	\alpha >0$ then the spectrum of $M$ is translated to the right, and the rightmost eigenvalue may become the spectral radius of the shifted matrix. Moreover, the Frobenius norm of the shifted matrix may be smaller than that of $M$, for example,
when the graph has no loops; in that case,
the diagonal of $M$ is negative and $\|M\|_\fro$ can be decreased
by means of a small positive shift.
Indeed, note that $\|M+\alpha I\|_\fro$ is minimum when 
$\alpha = -\mathrm{trace}(M)/n$.

% =========================================================
\section{Sensibility under small perturbations}
% =========================================================

Let $G = (V,E)$ be a given graph and let $S\subseteq V$
be a set 
with $Q(S) > 0$ where $Q$ is the modularity function induced by
$M_{\mathrm{NG}}$.
Let $(i,j)$ be an edge missing in $G$, and let $G' = (V,E')$
be the graph obtained by adding to $G$ that edge:
$E'(i,j) > 0$. It is not difficult to verify that,
due to the new edge,
\begin{itemize}
\item
if both $i\in S$ and $j\in S$ then $Q(S)$ increases,
\item
if $i\in S$ and $j\notin S$ then $Q(S)$ decreases.
\end{itemize}
Analogous behaviours can be observed by using other modularity-type functions, among those recalled in Section \ref{sec:motivations}.
Indeed, in some sense, in the first case the new edge increases the internal connection, and $S$ becomes a stronger community than before; while in the latter case $S$ becomes less separated from its exterior, hence it is less recognizable as a community.
It is natural to ask whether that ``monotonicity property'' of the modularity function is somewhat preserved by $m_G$.
Indeed, from our standpoint,
it makes sense to observe the variation of the rightmost eigenvalue $m_G$ of $M$ 
after a small increment on the weight of one of the edges of $G$.
Accordingly, in place of the conditions like $i\in S$ or $i\notin S$, we consider the sign of the $i$-th entry of a corresponding eigenvector of $M$. In fact, nodal domain based methods 
% for community detection 
employ signs of eigenvector entries to locate possible communities: a positive value indicates that the vertex belongs to a cluster and a negative value that it is outside the cluster.

Of course if $M=A + \diag -\sigma vv^\t$  is a generalized modularity matrix for $G$, the modularity matrix $M' = A' + \diag'-\sigma^\prime v^\prime v^{\prime \t}$ for the new graph $G'$ should be defined properly. Although it is clear what $A'$ is, the matrices $\diag'$ and $\sigma^\prime v^\prime v^{\prime\t}$ may have not a clear definition.  
For definiteness, we consider the following assumption:
If the graph $G$ is perturbed by adding a weight $\ep > 0$
to the edge $(i,j)$ then
$\diag^\prime = \diag$ and there exists a symmetric matrix $E$ such that 
$\sigma^\prime v^\prime v^{\prime \t} = \sigma (I+E)vv^\t(I+E)$
and $\|E\|_2 \leq \eta\ep$ for some $\eta$.
That is, we assume that the rank-one term in $M'$ is a small
relative perturbation of that in $M$.
That assumption is fulfilled in practice by 
all modularity-type matrices introduced in Section \ref{sec:motivations}.
Note that it is possible to consider as $E$ the diagonal matrix
whose diagonal entries are
$$
   E_{ii} =
   \frac{\sqrt{\sigma'}v'_i - \sqrt{\sigma}v_i}{\sqrt{\sigma}v_i}  
   \qquad i = 1,\ldots, n .
$$
A possible result along this direction is discussed throughout the remaining part of this section.

\begin{definition}
Let $G_0$ be a given graph, let $i,j\in V$ be a fixed pair of vertices, and let $G_\varepsilon$ be the graph obtained by adding the edge in $(i,j)$ to $G_0$ with weight $\varepsilon>0$. (Assume that,
if $(i,j)$ 
is an edge in $G_0$ then its weight in $G_\varepsilon$ is increased by $\ep$.)
Let $M_0$ and $M_\ep$ be generalized modularity matrices
of $G_0$ and $G_\ep$, respectively. 
Define
$$
   \mu_{ij}^\ep = \frac{m_{G_\ep} - m_{G_0}}{\ep} \, .
$$
\end{definition}

Let $M_0$ and $M_\ep$ as in the previous definition.
If $M_\ep - M_0 = \varepsilon(e_ie_j^\t +e_je_i^\t)$
and $\lambda_1(M_0)$ is simple
then $\lambda_1(M_\ep)$ varies according to the sign
of $x_ix_j$, where $x$ is a leading eigenvector of $M_0$,
at least for sufficiently small $\ep$. 
In fact, from classical results in eigenvalue perturbation theory
\cite{wilkinson}, in the stated hypotheses
$\lambda_1(M_\ep)$ is differentiable for small $\varepsilon$,
whence $\mu_{ij}^\ep = \lambda'_1(M_0) + o(\ep)$.
Moreover, assuming that $x$ is normalized, we have
$$
   \lambda'_1(M_0) = \ep^{-1}x^\t (M_\ep - M_0) x
   = x^\t (e_ie_j^\t +e_je_i^\t) x =
   2 x_ix_j ,
$$
showing indeed that $\mu_{ij}^\ep = 2x_ix_j + o(\ep)$.
Now consider the general case where, according to our previous assumption, we have
$$
   M_\ep - M_0 = \varepsilon(e_ie_j^\t +e_je_i^\t) -
   \sigma [(I+E)vv^\t(I+E) - vv^\t] .
$$
Then,
\begin{align*}
   x^\t (M_\ep - M_0)x & = 2\ep x_ix_j - 
   \sigma [x^\t (I+E)vv^\t (I+E)x - x^\t vv^\t x]   \\
   & = 2\ep x_ix_j - 
   \sigma [(x^\t (I+E)v)^2 - (v^\t x)^2]   \\
   & = 2\ep x_ix_j - 
   \sigma (x^\t(2I+E)v)(v^\t Ex) .   
\end{align*}   
Let $\cos\theta = (x^\t v)/\|v\|_2\|x\|_2$ be the cosine of the angle
between $x$ and $v$.
Taking norms and assuming $\|x\|_2 = 1$ as before, we obtain
$$
   |\mu_{ij}^\ep - 2x_ix_j | \lesssim 2\eta|\cos\theta|\,\sigma \|v\|_2^2 
   = 2\eta|\cos\theta|\, \|\sigma vv^\t\|_2 ,
$$
neglecting lower order terms.
%$$
%   |\mu_{ij}^\ep - 2x_ix_j | \lesssim 2\sigma \|v\|_2^2\eta 
%   = 2 \|\sigma vv^\t\|_2\eta ,
%$$
%assuming $\|x\|_2 = 1$ as before.
Thus, if $\eta$ and $|\cos\theta|$ are sufficiently small then 
$\mu_{ij}^\ep$ has the same sign of $x_ix_j$.
In particular, if the new edge is added between two nodes 
having the same sign in $x$ then the algebraic modularity increases,
and conversely, if $x_ix_j < 0$.

\section{Positive eigenvalues and number of modules}
% =========================================================

On the basis of rather informal arguments, Newman claims in 
\cite[Sect.\ B]{newman-eigenvectors} that the number of positive eigenvalues of $M_{\mathrm{NG}}$ is related to the number of communities recognizable in the graph $G$. 
The subsequent Theorem \ref{thm:k}, which generalizes an analogous result
concerning the matrix $M_{\mathrm{NG}}$ shown in \cite[Thm.\ 6.2]{FT14},
proves that for any generalized modularity matrix and
the modularity function $Q$ associated to it, 
the number of positive eigenvalues of $M$ is actually an upper bound for the cardinality of any family of pairwise disjoint modules in $G$
having the property that, if any two modules are merged then the overall modularity does not increase.

\begin{lemma}   \label{lem:k}
Let $S_1,\ldots,S_k$ be $k$ pairwise disjoint, nontrivial subsets of $V$,
with $k \ge 1$. Let $C$ be the $k\times k$ symmetric matrix with 
$C_{ij} = \uno_{S_i}^\t M\uno_{S_j}$
where $M$ is any modularity matrix. 
The number of positive (nonnegative) eigenvalues 
of $M$ is not smaller than the number of 
positive (nonnegative, respectively) eigenvalues 
of $C$.
\end{lemma}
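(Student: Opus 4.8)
The plan is to realize $C$ as a scaled compression of $M$ onto a $k$-dimensional coordinate subspace and then read off the two eigenvalue counts from the Cauchy interlacing inequalities \eqref{eq:interlacing} together with Sylvester's law of inertia.

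First I would introduce the matrix $Z = (\uno_{S_1}\mid\cdots\mid\uno_{S_k})\in\mathbbm{R}^{n\times k}$, so that $C = Z^\t M Z$ directly from the definition of its entries. Since the sets $S_i$ are pairwise disjoint and each is nontrivial (hence nonempty), the columns of $Z$ are nonzero and mutually orthogonal; thus $Z$ has full column rank $k$ and $Z^\t Z = D := \mathrm{Diag}(|S_1|,\ldots,|S_k|)$ is a positive definite diagonal matrix. Setting $\hat Z = Z D^{-1/2}$ produces a matrix with orthonormal columns, and $\hat C := \hat Z^\t M \hat Z = D^{-1/2} C D^{-1/2}$. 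Because $\hat C$ is congruent to $C$, Sylvester's law of inertia guarantees that $C$ and $\hat C$ have exactly the same number of positive eigenvalues and the same number of nonnegative eigenvalues.

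Next I would apply \eqref{eq:interlacing} with $M$ in the role of $A$ and $\hat Z$ in the role of the matrix with orthonormal columns, noting that $\hat Z$ has $k$ columns, so one must read the ``$n-k$'' appearing there as our $k$ (and the ``$k$'' there as our $n-k$). This gives $\lambda_i(M)\ge\lambda_i(\hat C)$ for $i=1,\ldots,k$. If $C$ has $p$ positive eigenvalues, then so does $\hat C$, hence $\lambda_p(\hat C)>0$ and therefore $\lambda_p(M)\ge\lambda_p(\hat C)>0$, which shows that $M$ has at least $p$ positive eigenvalues. Replacing ``positive'' by ``nonnegative'' and the strict inequality $\lambda_p(\hat C)>0$ by $\lambda_q(\hat C)\ge 0$ runs verbatim and settles the parenthetical claim.

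I do not expect a genuine obstacle; the two points requiring care are the linear independence of the characteristic vectors of pairwise disjoint nontrivial sets, which legitimizes the normalization $\hat Z$ and makes $\hat C$ truly congruent to $C$ rather than to a lower-rank object, and the bookkeeping of the index shift in \eqref{eq:interlacing}. Should one prefer to avoid interlacing, an equivalent route uses the fact that the number of positive (respectively, nonnegative) eigenvalues of a symmetric matrix is the maximal dimension of a subspace on which its quadratic form is positive definite (respectively, nonnegative): a witnessing subspace $U\subseteq\mathbbm{R}^k$ for $C$ is carried by the injective map $Z$ onto a subspace $ZU\subseteq\mathbbm{R}^n$ of the same dimension with the same sign behaviour of the form of $M$, since $x^\t M x = y^\t C y$ whenever $x = Zy$.
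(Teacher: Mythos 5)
Your proposal is correct and is essentially identical to the paper's own proof: both normalize $Z=[\uno_{S_1}\cdots\uno_{S_k}]$ by $\mathrm{Diag}(|S_1|,\ldots,|S_k|)^{-1/2}$ to obtain orthonormal columns, invoke Sylvester's law of inertia to transfer the eigenvalue counts from $C$ to the compression $\hat Z^\t M\hat Z$, and conclude by Cauchy interlacing. The alternative subspace argument you sketch at the end is a fine equivalent, but the main route matches the paper step for step.
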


\begin{proof}
Consider the matrices $Z = [\uno_{S_1} \cdots \uno_{S_k}]$
and $\mathit \Sigma = \mathrm{Diag}(|S_1|,\ldots,|S_k|)^{-1/2}$.
Note that $\hat Z = Z\mathit\Sigma$ has orthonormal columns.
By Sylvester's law of inertia, the number of positive 
(nonnegative) eigenvalues of $C$
coincides with the number of positive (nonnegative, respectively) eigenvalues of
$\mathit\Sigma C \mathit\Sigma = \hat Z^\t M \hat Z$.
The claim follows by Cauchy interlacing inequalities \eqref{eq:interlacing}.
\end{proof}

Given a family of pairwise disjoint subsets 
$\mathcal{P} = \{S_1,\ldots,S_k\}$ one usually defines the 
modularity of $\mathcal{P}$ as $Q(\mathcal{P}) = \sum_iQ(S_i)$.
The maximization of the latter quantity is 
a recurrent task in community detection algorithms 
\cite{newman-eigenvectors,Schaeffer2007,TVDN_CPM}.
If each $S_i$ is a module, $\mathcal{P}$
maximizes $Q(\mathcal{P})$
and contains the least number of sets among all such families,
then $Q(S_i,S_j) < 0$ for $i\neq j$,
otherwise we can reduce $|\mathcal{P}|$ or increase $Q(\mathcal{P})$ (or both) by merging 
subsets whose joint modularity is nonnegative. 
In that case, the matrix $C$ introduced in the preceding lemma
has a sign pattern which is well known in the field of nonnegative matrices \cite{positive-book}. One possible consequence is 
stated in the forthcoming result, relating the number
of positive eigenvalues of $M$ to the number of disjoint modules in $G$ that optimize the overall modularity.

\begin{theorem}   \label{thm:k}
Let $S_1,\ldots,S_k$ be $k$ pairwise disjoint, nontrivial subsets of $V$,
with $k \ge 1$. Suppose that, for all $i = 1,\ldots,k$,
we have $Q(S_i) > 0$ and $Q(S_i,S_j)<0$ for $i\neq j$. 
If there exist positive numbers $\alpha_1,\dots, \alpha_k$ such that 
$$
   \alpha_i Q(S_i) > \sum_{j\neq i} \alpha_j |Q(S_i,S_j)|\, .
$$
then $M$ has at least $k$ positive eigenvalues.
\end{theorem}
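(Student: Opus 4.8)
The plan is to pass to the $k\times k$ matrix $C$ with entries $C_{ij}=\uno_{S_i}^\t M\uno_{S_j}$ introduced in Lemma~\ref{lem:k} and to show that, under the stated hypotheses, $C$ is positive definite; Lemma~\ref{lem:k} then immediately yields that $M$ has at least $k$ positive eigenvalues. Note that $C_{ii}=Q(S_i)$ and $C_{ij}=Q(S_i,S_j)$ for $i\neq j$, so the sign assumptions say that $C$ has positive diagonal entries and negative off-diagonal entries.

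The key observation is that the inequality in the statement is precisely a scaled strict diagonal dominance condition for $C$. Let $D=\mathrm{Diag}(\alpha_1,\ldots,\alpha_k)$, which is nonsingular since every $\alpha_i>0$, and consider the congruent matrix $C'=DCD$, which has the same inertia as $C$ by Sylvester's law of inertia. One computes $C'_{ii}=\alpha_i^2 Q(S_i)$ and $C'_{ij}=\alpha_i\alpha_j Q(S_i,S_j)$ for $i\neq j$; since $Q(S_i,S_j)<0$ this gives $|C'_{ij}|=\alpha_i\alpha_j|Q(S_i,S_j)|$, so the $i$-th Gershgorin radius of $C'$ equals $\alpha_i\sum_{j\neq i}\alpha_j|Q(S_i,S_j)|$. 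Multiplying the hypothesis $\alpha_i Q(S_i)>\sum_{j\neq i}\alpha_j|Q(S_i,S_j)|$ by $\alpha_i>0$ shows that $C'_{ii}$ strictly exceeds that radius for every $i$. Hence $C'$ is strictly diagonally dominant with positive diagonal, so by Gershgorin's theorem all of its (real) eigenvalues are positive; thus $C'$, and therefore $C$, is positive definite.

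Finally, $C$ has $k$ positive eigenvalues, so Lemma~\ref{lem:k} gives that $M$ has at least $k$ positive eigenvalues, completing the argument. I do not expect a genuine obstacle here: the only points requiring care are checking that Lemma~\ref{lem:k} is invoked in the right direction (it bounds the positive inertia of $M$ from below by that of $C$) and noticing that the strict inequality in the hypothesis is exactly what upgrades the Gershgorin bound to strict positive definiteness rather than mere semidefiniteness; in particular the assumption $Q(S_i)>0$ is then automatically subsumed by the $\alpha$-inequality.
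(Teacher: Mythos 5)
Your proof is correct, and its overall architecture coincides with the paper's: both arguments pass to the $k\times k$ matrix $C=(\uno_{S_i}^\t M\uno_{S_j})$, show that $C$ is positive definite, and then invoke Lemma~\ref{lem:k} to transfer the $k$ positive eigenvalues of $C$ to $M$. The only divergence is in how positive definiteness of $C$ is certified. The paper observes that $C$ has positive diagonal, negative off-diagonal entries, and satisfies $C\alpha>0$ for the positive vector $\alpha$, and then cites the classical characterization of symmetric M-matrices to conclude $C\succ 0$. You instead prove this step from scratch: the congruence $C'=DCD$ with $D=\mathrm{Diag}(\alpha_1,\ldots,\alpha_k)$ preserves inertia, and the hypothesis is exactly strict diagonal dominance of $C'$ with positive diagonal, so Gershgorin finishes it. This is in essence the standard proof of the M-matrix criterion the paper cites, so you have made the argument self-contained rather than changed it. A small bonus of your version is that it never uses the sign condition $Q(S_i,S_j)<0$ (Gershgorin only sees $|C'_{ij}|$), so the conclusion holds under the $\alpha$-inequality alone; and, as you note, $Q(S_i)>0$ is likewise implied by it. Your concluding checks --- that Lemma~\ref{lem:k} bounds the positive inertia of $M$ from below by that of $C$, and that strictness of the inequality is what yields definiteness rather than semidefiniteness --- are both accurate.
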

\begin{proof}
Let $C$ be the $k\times k$ symmetric matrix with 
$C_{ij} = \uno_{S_i}^\t M\uno_{S_j}$ and let 
$\alpha = (\alpha_1,\ldots, \alpha_k)^\t$.
In the stated hypotheses $C_{ii} > 0$,
$C_{ij} < 0$ for $i\neq j$ and $C\alpha > 0$.
By a classical result on nonnegative matrices \cite[\S 6.2]{positive-book}
$C$ is a symmetric M-matrix, so in particular it is positive definite.
The claim follows immediately from Lemma \ref{lem:k}.
\end{proof}

It is worth noting that the condition on $\alpha_1,\dots, \alpha_k$ 
in the previous theorem can be easily fulfilled when $S_1,\ldots,S_k$
is a partition of $V$ and $M = M_{\mathrm{NG}}$
or $M = M_{\mathrm{AFG}}$, see Section \ref{sec:motivations}.
Indeed, in those cases we have $M\uno = 0$
and, consequently, we can obtain the sought inequalities by setting $\alpha_1 = \ldots = \alpha_k = 1$ as shown in the forthcoming corollary. 

% % % df: 31.01.2015. Sento la necessità di scriverlo chiaramente.
\begin{corollary}
Let $\mathcal{P} = \{S_1,\ldots,S_p\}$ be a partition of $V$
into pairwise disjoint subsets. 
Suppose that $Q(S_i) > 0$ and $Q(S_i,S_j) < 0$ for $i\neq j$, 
where $Q$ is the modularity function associated to a generalized modularity 
matrix $M$ such that $M\uno = 0$.
Then the number of positive eigenvalues of $M$ is at least $p-1$.
\end{corollary}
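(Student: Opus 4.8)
The plan is to deduce the statement from Theorem \ref{thm:k} applied not to the whole partition but to a subfamily of $p-1$ of its blocks. The point is that the constraint $M\uno = 0$ produces a zero-sum relation which makes the hypothesis of Theorem \ref{thm:k} fail by an \emph{equality} for the full family (consistently with the fact that $\uno$ is then a null vector of $M$), and that deleting a single block is exactly what is needed to turn the relevant inequality strict.

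First I would record the identity coming from the partition. Since $\mathcal{P}$ covers $V$ exactly once, $\sum_{j=1}^{p}\uno_{S_j} = \uno$, so for every $i$
\[
   \sum_{j=1}^{p} Q(S_i,S_j)
   = \uno_{S_i}^\t M\Big(\sum_{j=1}^{p}\uno_{S_j}\Big)
   = \uno_{S_i}^\t M\uno = 0 .
\]
Because $Q(S_i,S_j) < 0$ whenever $j\neq i$, this rearranges to $Q(S_i) = \sum_{j\neq i}|Q(S_i,S_j)|$; hence the choice $\alpha_1 = \cdots = \alpha_p = 1$ makes the inequality required by Theorem \ref{thm:k} hold only as an equality for the full family, so nothing better than $p-1$ positive eigenvalues can be extracted this way.

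Next I would pass to the truncated family $S_1,\ldots,S_{p-1}$ (we may assume $p\ge 2$, the case $p=1$ being vacuous since then $S_1 = V$ would force $Q(S_1) = \uno^\t M\uno = 0$). These are still pairwise disjoint, nontrivial subsets, still satisfy $Q(S_i) > 0$ and $Q(S_i,S_j) < 0$ for $i\neq j$, and for each $i\le p-1$ the identity above, after splitting off the $j=p$ term, gives
\[
   Q(S_i) = \sum_{\substack{j\le p-1\\ j\neq i}}|Q(S_i,S_j)| + |Q(S_i,S_p)|
   > \sum_{\substack{j\le p-1\\ j\neq i}}|Q(S_i,S_j)| ,
\]
where strictness uses precisely the assumed strict inequality $Q(S_i,S_p) < 0$. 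Thus Theorem \ref{thm:k} applies with $k = p-1$ and $\alpha_i \equiv 1$, yielding at least $p-1$ positive eigenvalues of $M$.

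The whole argument is light once the zero-sum identity is in hand; the only subtlety worth flagging is that it is the \emph{strictness} of $Q(S_i,S_j) < 0$, rather than mere nonpositivity, that rescues the truncated inequality, and that this strictness is exactly part of the hypothesis. I do not anticipate any substantive obstacle beyond this bookkeeping.
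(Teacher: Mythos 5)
Your proof is correct and follows essentially the same route as the paper's: both exploit $M\uno=0$ to get the zero row-sum identity $Q(S_i)+\sum_{j\neq i}Q(S_i,S_j)=0$, drop the block $S_p$ to make the dominance inequality strict, and invoke Theorem \ref{thm:k} with $k=p-1$ and $\alpha_i\equiv 1$. No issues.
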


\begin{proof}
Let $Z = [\uno_{S_1}\cdots \uno_{S_p}]$ and $C = Z^\t MZ \equiv (\uno_{S_i}^\t M\uno_{S_j})$. Since $Z\uno = \uno$,
by hypothesis we obtain $C \uno = Z^\t MZ\uno = 0$. Then,
for $i = 1,\ldots,p-1$ we have
$$
   Q(S_i) + \sum_{j\neq i,p} Q(S_i,S_j) = -Q(S_i,S_p) > 0 .
$$
Using Theorem \ref{thm:k} with $k = p-1$ we obtain the claim.
\end{proof}

We close this section with the forthcoming theorem which states that, 
if $G$ has $k$ subgraphs that are well separated and sufficiently rich in internal edges (including loops), then
$M$ has at least $k-1$ positive eigenvalues. 
This result extends Theorem 6.1
in \cite{FT14} to arbitrary generalized modularity matrices.
For better clarity consider that,
if $S$ and $T$ are two disjoint subsets of $V$, then the number $\uno_{S}^\t A\uno_{T}$ corresponds to
the total weight of edges joining nodes in $S$ with nodes in $T$.
\begin{theorem}
Let $S_1,\ldots,S_k$ be pairwise disjoint subsets of $V$, 
with $k \geq 1$, such that 
$$
   e_{\mathrm{in}}(S_i) + \uno_{S_i}^\t \diag\uno_{S_i}
   > \sum_{j\neq i} \uno_{S_i}^\t A\uno_{S_j} .
$$
Then $M$ has at least $k-1$ positive eigenvalues.
\end{theorem}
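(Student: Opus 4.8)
The plan is to reduce the statement to a $k\times k$ eigenvalue problem via Lemma~\ref{lem:k} and then read off the positive eigenvalues from a diagonal-dominance argument. First I would form the $k\times k$ symmetric matrix $C$ with $C_{ij} = \uno_{S_i}^\t M\uno_{S_j}$; by Lemma~\ref{lem:k} it is enough to show that $C$ has at least $k-1$ positive eigenvalues. Writing $M = A + \diag - \sigma vv^\t$ and using that the $S_i$ are pairwise disjoint while $\diag$ is diagonal (so $\uno_{S_i}^\t\diag\uno_{S_j}=0$ for $i\neq j$), I would record the decomposition $C = B - \sigma aa^\t$, where $a = (\uno_{S_1}^\t v,\ldots,\uno_{S_k}^\t v)^\t \ge 0$ and $B$ is the $k\times k$ symmetric matrix with $B_{ii} = e_{\mathrm{in}}(S_i) + \uno_{S_i}^\t\diag\uno_{S_i}$ and $B_{ij} = \uno_{S_i}^\t A\uno_{S_j} \ge 0$ for $i\neq j$. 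In other words $C$ is itself a generalized modularity matrix of order $k$.

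Next I would observe that the hypothesis of the theorem says exactly that $B$ is strictly diagonally dominant with positive diagonal: the off-diagonal row sums are $\sum_{j\neq i}\uno_{S_i}^\t A\uno_{S_j} = \sum_{j\neq i}|B_{ij}|$ since $A\ge O$. Hence $B$ is positive definite by Gershgorin's theorem. Then, since $B = C + (\sqrt\sigma\,a)(\sqrt\sigma\,a)^\t$ is a nonnegative rank-one perturbation of $C$, Weyl's inequalities \eqref{eq:Weyl} give $\lambda_i(C)\ge\lambda_{i+1}(B) > 0$ for $i = 1,\ldots,k-1$, so $C$ has at least $k-1$ positive eigenvalues; Lemma~\ref{lem:k} transfers this to $M$.

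There is no serious obstacle here, and the calculation is routine. The only points that need a little care are: (i) checking the off-diagonal cancellation $\uno_{S_i}^\t\diag\uno_{S_j}=0$, which is what lets the diagonal term $\diag$ be absorbed into $B$ without spoiling the sign pattern of its off-diagonal entries; (ii) noting that the strict inequality in the hypothesis forces every $S_i$ to be nonempty (otherwise it reads $0>0$), so that the $S_i$ are admissible inputs for Lemma~\ref{lem:k}; and (iii) the degenerate case $a = 0$, in which simply $C = B$ and the conclusion is immediate from Gershgorin alone. One could alternatively try to invoke the earlier spectral results applied to $C$ regarded as a generalized modularity matrix, but the direct Weyl/Gershgorin route seems cleanest.
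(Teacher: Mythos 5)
Your proposal is correct and follows essentially the same route as the paper: form $C = Z^\t MZ = B - \sigma(Z^\t v)(Z^\t v)^\t$, note that the hypothesis makes $B$ nonnegative and strictly diagonally dominant hence positive definite, conclude via the rank-one (Weyl) perturbation bound that $C$ has at least $k-1$ positive eigenvalues, and transfer this to $M$ by Lemma~\ref{lem:k}. Your added remarks (vanishing of the off-diagonal $\diag$ terms, nonemptiness of the $S_i$, the case $a=0$) are harmless refinements of the same argument.
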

\begin{proof}
Consider the matrices $Z$ and $\mathit\Sigma$ introduced in
the proof of Lemma \ref{lem:k}.
Introduce the $k\times k$ matrix $B = Z^\t (A+\diag)Z$.
We have $B_{ii} = e_{\mathrm{in}}(S_i) + \uno_{S_i}^\t \diag\uno_{S_i}$ and
$B_{ij} = \uno_{S_i}^\t A\uno_{S_j}$ for $i \neq j$. 
By hypothesis, $B$ is nonnegative and strictly diagonally dominant,
hence it is positive definite. 
Consider the matrix $C$ defined in Lemma \ref{lem:k}:
$$
   C = Z^\t M Z = Z^\t (A+\diag - \sigma vv^\t) Z = 
   B - \sigma (Zv)(Zv)^\t .
$$
We see that $C$ is a negative semidefinite,
rank-one perturbation of $B$, hence it has at least $k-1$ positive eigenvalues.
The claim follows from Lemma \ref{lem:k}.
\end{proof}

% ==============================================================
\section{Conclusions}
\label{sec:conclusions}
% ==============================================================

Community detection is a major problem arising in modern complex network analysis, and modularity-type matrices and functions play a fundamental role in network science. In fact, several generalizations of the modularity matrix originally introduced 
by Newman and Girvan 
\cite{newman-eigenvectors,newman-modularity,newman-girvan} appear in the complex networks literature, often in a rather hidden form. In this paper we put in evidence 
that a common structure and various spectral properties are shared by all these matrices.
As the matrix theoretic approach to modularity based methods is very recent,
several directions of investigation are left open. Relevant steps would be, in our opinion, to provide lower bounds for the modularity of graphs in terms of the spectrum of the associated modularity matrix, and robustness results of leading modules with respect to different modularity measures.

% ==============================================================

%\section*{References}

\bibliography{./networks_ref_jan15}

\begin{thebibliography}{10}

\bibitem{multires-2}
A.~Arenas, A.~Fernandez, and S.~Gomez.
\newblock Analysis of the structure of complex networks at different resolution
  levels.
\newblock {\em New J. Phys.}, 10:053039, 2008.

\bibitem{positive-book}
A.~Berman and R.~J. Plemmons.
\newblock {\em Nonnegative {M}atrices in the {M}athematical {S}ciences},
  volume~9 of {\em Classics in Applied Mathematics}.
\newblock Society for Industrial and Applied Mathematics (SIAM), Philadelphia,
  PA, 1994.

\bibitem{Louvain_method}
V.~D. Blondel, J.-L. Guillaume, R.~Lambiotte, and E.~Lefebvre.
\newblock Fast unfolding of communities in large networks.
\newblock {\em Journal of Statistical Mechanics: Theory and Experiment},
  2008(10):P10008, 2008.

\bibitem{MR2371048}
F.~Chung and R.~Graham.
\newblock Quasi-random graphs with given degree sequences.
\newblock {\em Random Structures Algorithms}, 32(1):1--19, 2008.

\bibitem{chung}
F.~R.~K. Chung.
\newblock {\em Spectral {G}raph {T}heory}, volume~92 of {\em CBMS Regional
  Conference Series in Mathematics}.
\newblock AMS, 1997.

\bibitem{nodal-domain-theorem}
E.~B. Davies, G.~M.~L. Gladwell, J.~Leydold, and P.~F. Stadler.
\newblock Discrete nodal domain theorems.
\newblock {\em Linear Algebra Appl.}, 336:51--60, 2001.

\bibitem{duval-nodal-domains}
A.~M. Duval and V.~Reiner.
\newblock {P}erron--{F}robenius type results and discrete versions of nodal
  domain theorems.
\newblock {\em Linear Algebra Appl.}, 294:259--268, 1999.

\bibitem{Estrada2011}
E.~Estrada.
\newblock {Community detection based on network communicability}.
\newblock {\em Chaos}, 21(1):016103, 2011.

\bibitem{FT14}
D.~Fasino and F.~Tudisco.
\newblock An algebraic analysis of the graph modularity.
\newblock {\em SIAM J. Matrix Anal. Appl.}, 35(3):997--1018, 2014.

\bibitem{fiedler-connectivity}
M.~Fiedler.
\newblock Algebraic connectivity of graphs.
\newblock {\em Czechoslovak Mathematical Journal}, 23:298--305, 1973.

\bibitem{fiedler-vector}
M.~Fiedler.
\newblock A property of eigenvectors of nonnegative symmetric matrices and its
  application to graph theory.
\newblock {\em Czechoslovak Mathematical Journal}, 25(100):619--633, 1974.

\bibitem{santo-fortunato}
S.~Fortunato.
\newblock Community detection in graphs.
\newblock {\em Physics Reports}, 486:75--174, 2010.

\bibitem{fortunato-res-limit-1}
S.~Fortunato and M.~Barth\'{e}lemy.
\newblock Resolution limit in community detection.
\newblock {\em Proc. Natl. Acad. Sci. USA}, 104:36--41, 2007.

\bibitem{GA05}
R.~Guimer{\`a} and L.~A.~N. Amaral.
\newblock Functional cartography of complex metabolic networks.
\newblock {\em Nature}, 433:895--900, 2005.

\bibitem{kumpula-res-limit}
J.~M. Kumpula, J.~Saram\"{a}ki, K.~Kaski, and J.~Kert\'{e}sz.
\newblock Limited resolution in complex network community detection with potts
  model approach.
\newblock {\em Eur. Phys. J. B}, 56:41--45, 2007.

\bibitem{fortunato-res-limit-2}
A.~{Lancichinetti} and S.~{Fortunato}.
\newblock {Limits of modularity maximization in community detection}.
\newblock {\em Phys. Rev. E}, 84:066122, 2011.

\bibitem{newman-eigenvectors}
M.~E.~J. Newman.
\newblock Finding community structure in networks using the eigenvectors of
  matrices.
\newblock {\em Phys. Rev. E}, 69:321--330, 2006.

\bibitem{newman-modularity}
M.~E.~J. Newman.
\newblock Modularity and community structure in networks.
\newblock {\em Proc. Natl. Acad. Sci. USA}, 103:8577--8582, 2006.

\bibitem{newman-girvan}
M.~E.~J. Newman and M.~Girvan.
\newblock Finding and evaluating community structure in networks.
\newblock {\em Phys. Rev. E}, 69(026113), 2004.

\bibitem{powers-graph-eigenvector}
D.~L. Powers.
\newblock Graph partitioning by eigenvectors.
\newblock {\em Linear Algebra Appl.}, 101:121--133, 1988.

\bibitem{RB2004}
J.~Reichardt and S.~Bornholdt.
\newblock Detecting fuzzy community structures in complex networks with a potts
  model.
\newblock {\em Phys. Rev. Lett.}, 93:218701, 2004.

\bibitem{multires-1}
J.~Reichardt and S.~Bornholdt.
\newblock Statistical mechanics of community detection.
\newblock {\em Phys. Rev. E}, 74:016110, 2006.

\bibitem{Ronhovde_Nussinov}
P.~Ronhovde and Z.~Nussinov.
\newblock Local resolution-limit-free {P}otts model for community detection.
\newblock {\em Phys. Rev. E}, 81:046114, Apr 2010.

\bibitem{Schaeffer2007}
S.~E. Schaeffer.
\newblock Graph clustering.
\newblock {\em Computer Science Review}, 1(1):27 -- 64, 2007.

\bibitem{MR1823511}
P.~Tarazaga, M.~Raydan, and A.~Hurman.
\newblock Perron--{F}robenius theorem for matrices with some negative entries.
\newblock {\em Linear Algebra Appl.}, 328(1-3):57--68, 2001.

\bibitem{TVDN_CPM}
V.~A. Traag, P.~Van~Dooren, and Y.~Nesterov.
\newblock Narrow scope for resolution-limit-free community detection.
\newblock {\em Phys. Rev. E}, 84:016114, Jul 2011.

\bibitem{wilkinson}
J.~H. Wilkinson.
\newblock {\em {The algebraic eigenvalue problem}}.
\newblock Clarendon Press, Oxford University Press, Walton Street, 1965.

\end{thebibliography}
\bibliographystyle{plain}

\end{document}